\newtheoremstyle{fancy}{}{}{\itshape}{}{\textbf\bgroup}{.\egroup}{ }{}
\newtheoremstyle{fanci}{}{}{\rm}{}{\textbf\bgroup}{.\egroup}{ }{}
\newtheoremstyle{ghost}{}{}{\itshape}{}{\textbf\bgroup}{\egroup}{ }{}
\theoremstyle{fancy}
\numberwithin{equation}{section} \swapnumbers
\newtheorem{prop}[equation]{Proposition}
\newtheorem{thm}[equation]{Theorem}
\theoremstyle{fanci}
\newtheorem{csa}[equation]{Covering Spectrum Algorithm}
\newtheorem{dfn}[equation]{Definition}
\newtheorem{exa}[equation]{Example}
\newtheorem{rem}[equation]{Remark}
\newcommand{\cref}[1]{Corollary~\ref{#1}}   %use: \cref{labelname}
\newcommand{\Z}{\mathbb Z}  % the integrers
\newcommand{\R}{\mathbb R} % the reals
\newcommand{\ff}{\mathbb F} % the reals
\newcommand{\nnR}{\mathbb R_{\geq 0}} % the non-negative reals
\renewcommand{\phi}{\varphi}
\newcommand{\tM}{\widetilde{M}}
\newcommand{\tX}{\widetilde{X}}
\newcommand{\mU}{\mathcal{U}}
\newcommand{\mL}{\mathcal{L}}
\newcommand{\bs}{\backslash}
\newcommand{\CovSpec}{\operatorname{CovSpec}} % the covering spectrum
\newcommand{\Jump}{\operatorname{Jump}} %
\newcommand{\Gl}{\operatorname{Gl}} %
\newcommand{\Fil}{\operatorname{Fil}} %
\newcommand{\ALSpec}{\operatorname{Spec_L}} % the (weak/absolute) length spectrum
\begin{document}

\newcommand{\spacing}[1]{\renewcommand{\baselinestretch}{#1}\large\normalsize}
\spacing{1.14}

\title{Isospectral surfaces with distinct covering spectra via Cayley Graphs}
%\title{Cayley graphs and isospectral surfaces with distinct covering spectra}

%%%%%%%%%%%%%% BART'S INFO  %%%%%%%%%%%
\author[B. de Smit]{Bart de Smit $^\ast$}
\address{Universiteit Leiden\\Mathematisch Instituut\\Postbus 9512 \\2300 RA
Leiden} \email{desmit@math.leidenuniv.nl}
\thanks{$^\ast$ Funded in part by the European Commission under contract
MRTN-CT-2006-035495}

%%%%%%%%%%%%%%  RUTH'S INFO  %%%%%%%%%%%
\author[R. Gornet]{Ruth Gornet $^\dagger$}
\address{University of Texas \\ Department of Mathematics \\ Arlington, TX
76019 } \email{rgornet@uta.edu}
\thanks{$^\dagger$ Research partially supported by NSF grant DMS-0204648}

%%%%%%%%%%%%%% CRAIG'S INFO  %%%%%%%%%%%%%
\author[C. J. Sutton]{Craig J. Sutton$^\sharp$}
\address{Dartmouth College\\ Department of Mathematics \\ Hanover, NH 03755}
\email{craig.j.sutton@dartmouth.edu}
\thanks{$^\sharp$ Research partially supported by NSF grant DMS-0605247 and a 
Career Enhancement Fellowship from the Woodrow Wilson National Fellowship Foundation}

%\subjclass{53C}
\keywords{Laplace spectrum, length spectrum, marked
length spectrum, covering spectrum, Gassmann-Sunada triples}

%%\date{\today}

%%%%%%%%%%%%%%%%%%%%%%%%%%%%%%%%%%%%%%%%%%%%%%%%%%
%%%%%%%%%%%%%%%%%%%%%%   ABSTRACT  %%%%%%%%%%%%%%%%%%%%%
%%%%%%%%%%%%%%%%%%%%%%%%%%%%%%%%%%%%%%%%%%%%%%%%%%

\begin{abstract}
The covering spectrum is a geometric invariant of a Riemannian manifold, 
more generally of a metric space, that measures the size of its one-dimensional 
holes by isolating a portion of the length spectrum.
In a previous paper we demonstrated that the covering spectrum 
is not a spectral invariant of a manifold in dimensions three and higher.
In this article we give an example of two isospectral Cayley graphs that 
admit length space structures with distinct covering spectra.
From this we deduce the existence of infinitely many pairs of 
Sunada-isospectral surfaces with unequal covering spectra.
\end{abstract}

\maketitle

%\tableofcontents

%%%%%%%%%%%%%%%%%%%%%%%%%%%%%%%%%%%%%%%%%%%%%%%
%%%%%%%%%%%%%%%%%%%      INTRODUCTION   %%%%%%%%%%%%%%%%%
%%%%%%%%%%%%%%%%%%%%%%%%%%%%%%%%%%%%%%%%%%%%%%%

\section{Introduction}

The covering spectrum of a complete length space $X$ is a geometric invariant
introduced by Sormani and Wei \cite{SW} 
that measures the size of one dimensional holes in 
$X$ by isolating the lengths of certain closed geodesics that 
are minimal in their free homotopy class. 
More specifically, the covering spectrum can be computed by 
considering a certain family $\{\tX^{\delta}\}_{\delta >0}$ of regular coverings of 
$X$, where $\tX^{\delta}$ covers $\tX^{\delta'}$ for $\delta' > \delta$. 
The covering spectrum of $X$ then consists of those values of $\delta$ where
the isomorphism type of the cover changes.
That is, the covering spectrum is the collection of $\delta$'s 
where there is a ``jump'' in the step function $\delta \mapsto \tX^{\delta}$. 
One can see that a ``jump'' at time $\delta$ corresponds 
to the appearance of a ``hole'' of circumference $2 \delta$ or, in the case of manifolds, a 
non-trivial free homotopy class whose shortest geodesic is of length $2\delta$ 
(cf. Proposition~\ref{Prop:NewGeo}).

As an example, we follow \cite[Example 2.5]{SW} and consider the flat $2 \times 3$ torus $X= S^1(2) \times S^1(3)$, 
where $S^1(c)$ denotes the circle of circumference $c$. 
One can see through the covering spectrum algorithm 
(see \ref{CSA}) that the $\delta$-covers of $X$ are given by 
$\tX^\delta = \R^2$ (for $0 < \delta \leq 1$),
$\tX^\delta = S^1(2) \times \R$ (for $1< \delta \leq \frac{3}{2}$ ) 
and $\tX^\delta = X$ (for $\delta > \frac{3}{2}$).
Hence, the covering spectrum of $X$ is given by 
$\{1, \frac{3}{2} \}$.
Each of these numbers is the circumference of a ``hole'' in this torus and 
also half the length of the minimal closed geodesics
in the free homotopy classes associated to the standard generators 
$(1,0)$ and $(0,1)$ in $\pi_1(X) = \Z \times \Z$.  

As there is a long-standing interest in the relationship 
between the Laplace spectrum of a manifold and its length spectrum
(i.e., the collection of lengths of closed geodesics), 
it is natural to wonder about the mutual influences between the Laplace spectrum 
and the covering spectrum. Specifically, one would like to know if the 
covering spectrum is a spectral invariant. 
In \cite[Ex.~10.5]{SW}, Sormani and Wei demonstrated that certain isospectral manifolds 
arising through Sunada's method \cite{Sunada} share the same covering spectrum, suggesting that 
the covering spectrum might at least be an invariant of this technique.
However, through an analysis of the group theory and geometry underlying the covering spectrum we 
constructed Sunada-isospectral manifolds with distinct covering spectra 
in dimensions three and higher \cite[Sec. 8]{DGS}, thus showing that 
the covering spectrum is not a spectral invariant. 

Indeed, we recall that a triple of finite groups $(G, H_1, H_2)$, where
$H_1$ and $H_2$ are subgroups of $G$, is said to be a \emph{Gassmann-Sunada} triple 
if for any conjugacy class $C \subset G$ the sets $C \cap H_1$ 
and $C \cap H_2$ are of the same order.
If $(G, H_1, H_2)$ is a Gassmann-Sunada triple where $G$ acts 
freely and isometrically on a Riemannian manifold $M$, 
then Sunada's theorem (Theorem~\ref{Thm:SunadaPesce})  
tells us that the quotient Riemannian manifolds $H_1 \bs M$ and $H_2\bs M$ are isospectral,
and the isospectral pairs arising in this manner are said to be \emph{Sunada-isospectral}.
Sunada's theorem provided the first systematic method for constructing isospectral manifolds 
and---along with its numerous variants---accounts for many of the 
isospectral pairs in the literature (see \cite{Gordon} for a 
retrospective on Sunada's method).
%\footnote{The reader is encouraged to consult \cite{Gordon} for a 
%retrospective on Sunada's method.}
In \cite{DGS} we discovered an analogous algebraic condition and theorem concerning 
the covering spectrum. Specifically, we showed the following.

\begin{thm}[\cite{DGS} Theorem 2.2]\label{Thm:DGS}
Let $(G, H_1, H_2)$ be a triple of finite groups, where 
$H_1$ and $H_2$ are subgroups of $G$, and suppose $G$ acts freely 
on a closed manifold $M$. If $M$ is 
simply-connected and of dimension at least $3$, then 
the following are equivalent:
\begin{enumerate}
\item $H_1$ and $H_2$ are \emph{jump equivalent} 
subgroups of $G$;
that is, for any subsets $S, T \subset G$ that 
are stable under conjugation we have
$$\langle H_1 \cap S \rangle = \langle H_1 \cap T \rangle \Longleftrightarrow 
\langle H_2 \cap S \rangle = \langle H_2 \cap T \rangle. $$
In which case we 
say that $(G, H_1, H_2)$ is a \emph{jump triple}.
\item For every $G$-invariant Riemannian metric on $M$, 
the quotient Riemannian manifolds $H_1 \bs M$ 
and $H_2 \bs M$ share the same covering spectrum.
\end{enumerate}
\end{thm}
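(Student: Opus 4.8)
The plan is to translate each quotient's covering spectrum into the language of its filtration by ``small-loop'' subgroups, and then to recognize condition (1) as precisely the assertion that these filtrations jump in the same places for every $G$-invariant metric. First I would fix a $G$-invariant metric $g$ on $M$, set $X_i = H_i \bs M$, and note that since $M$ is simply connected and $G$ acts freely, $M \to X_i$ is the universal Riemannian covering with $\pi_1(X_i) \cong H_i$. For $h \in G$ put $\ell(h) := \min_{x \in M} d_g(x, hx)$; this is a class function on $G$ because $g$ is $G$-invariant, so $S_c := \{ h \in G : \ell(h) < c \}$ is conjugation-stable and non-decreasing in $c$, and for $h \in H_i$ the number $\ell(h)$ is the length of the shortest closed geodesic of $X_i$ in the free homotopy class of $h$. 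The key dictionary, which I would extract from the construction of the $\delta$-covers (see \cite{SW} and \ref{CSA}) together with Proposition~\ref{Prop:NewGeo}, is this: the $\delta$-cover $\widetilde{X}_i^{\delta}$ corresponds to the normal subgroup $\Gamma_i^{\delta} := \langle\, h \in H_i : \ell(h) < 2\delta \,\rangle = \langle H_i \cap S_{2\delta} \rangle$ of $\pi_1(X_i)$ (a subgroup generated by a conjugation-stable set being automatically normal), and $\delta \in \CovSpec(X_i)$ if and only if $\Gamma_i^{\delta} \subsetneq \Gamma_i^{\delta + \ve}$ for every $\ve > 0$. Thus $\CovSpec(X_i)$ depends only on the non-decreasing step function $\delta \mapsto \Gamma_i^{\delta}$.

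Given this dictionary, the implication (1) $\Rightarrow$ (2) is essentially formal. Assuming $H_1$ and $H_2$ jump equivalent and fixing a $G$-invariant metric, for any $\delta_0 > 0$ one has
$$\delta_0 \in \CovSpec(X_1) \;\Longleftrightarrow\; \forall\,\ve > 0 \colon \langle H_1 \cap S_{2\delta_0} \rangle \neq \langle H_1 \cap S_{2\delta_0 + 2\ve} \rangle,$$
and applying jump equivalence to the conjugation-stable sets $S_{2\delta_0}$ and $S_{2\delta_0 + 2\ve}$, for each $\ve$ separately, turns the right-hand side into the same statement with $H_2$ in place of $H_1$, i.e.\ into $\delta_0 \in \CovSpec(X_2)$. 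So $\CovSpec(X_1) = \CovSpec(X_2)$ for this metric, hence for all of them.

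For the contrapositive of (2) $\Rightarrow$ (1), I would start from a failure of (1): conjugation-stable $S_0, T_0 \subseteq G$ with (after relabelling) $\langle H_1 \cap S_0 \rangle = \langle H_1 \cap T_0 \rangle$ but $\langle H_2 \cap S_0 \rangle \neq \langle H_2 \cap T_0 \rangle$. Replacing $T_0$ by $S_0 \cup T_0$ preserves both — the first because $H_1 \cap S_0$ and $H_1 \cap T_0$ both lie in $\langle H_1 \cap S_0 \rangle$, the second because $\langle H_2 \cap (S_0 \cup T_0) \rangle$ is generated by $\langle H_2 \cap S_0 \rangle \cup \langle H_2 \cap T_0 \rangle$ and so cannot coincide with both — so I may assume conjugation-stable $S \subsetneq T$ with $\langle H_1 \cap S \rangle = \langle H_1 \cap T \rangle$ and $\langle H_2 \cap S \rangle \subsetneq \langle H_2 \cap T \rangle$. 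After a further group-theoretic adjustment of $S$ and $T$ — necessary because the inequality $\ell(h^k) \le |k|\,\ell(h)$ constrains which conjugation-stable sets can arise as sub-level sets of a length function, so $S$ and $T$ must be suitably ``saturated'' without altering $\langle H_i \cap S \rangle$ or $\langle H_i \cap T \rangle$ — I would produce a $G$-invariant metric $g$ and thresholds $0 < \delta^{*} < \delta^{**}$ with $\langle H_i \cap \{\ell < 2\delta^{*}\}\rangle = \langle H_i \cap S\rangle$ and $\langle H_i \cap \{\ell < 2\delta^{**}\}\rangle = \langle H_i \cap T\rangle$ for $i = 1,2$. With such a metric the conclusion is immediate: $\Gamma_1^{\delta^{*}} = \langle H_1 \cap S\rangle = \langle H_1 \cap T\rangle = \Gamma_1^{\delta^{**}}$ forces $\Gamma_1^{\delta}$ to be constant on $[\delta^{*},\delta^{**}]$, so no point of $[\delta^{*},\delta^{**})$ lies in $\CovSpec(X_1)$, while $\Gamma_2^{\delta^{*}} = \langle H_2 \cap S\rangle \subsetneq \langle H_2 \cap T\rangle = \Gamma_2^{\delta^{**}}$ forces $\delta \mapsto \Gamma_2^{\delta}$ to jump at some $\delta_0 \in [\delta^{*},\delta^{**})$, whence $\delta_0 \in \CovSpec(X_2) \setminus \CovSpec(X_1)$ and (2) fails.

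The main obstacle is the geometric realization invoked above: building a $G$-invariant metric whose conjugacy-length function $\ell$ has prescribed sub-level sets, subject to the rigid constraints such a function must satisfy. I would work on the closed manifold $N = G \bs M$, which has $\pi_1(N) \cong G$ precisely because $M$ is simply connected; I would represent a generating set of $G$ together with the relevant intermediate classes by loops in $N$, use $\dim M \ge 3$ to make these loops a pairwise disjoint embedded family with disjoint tubular neighborhoods, shrink the metric inside the tubes so that the chosen classes become short (respectively of intermediate length), and enlarge the metric on the complement — a ``finding larger metrics'' step — so that every remaining class becomes longer than $2\delta^{**}$, tuning the scales to produce $\delta^{*}$ and $\delta^{**}$. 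This is where both hypotheses are essential: dimension at least $3$ gives the room to embed the loops disjointly and to enlarge the metric on the complement without obstruction, and simple connectivity of $M$ is what makes $\pi_1(N) = G$, so that the conjugation-stable subsets of $G$ appearing in condition (1) genuinely control the covering spectra of the $X_i$. The accompanying group-theoretic bookkeeping — reducing a failure of jump equivalence to a failing pair $(S,T)$ of exactly the shape the geometry can realize — I expect to be routine, but it has to be carried out in tandem with the construction.
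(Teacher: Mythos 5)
This theorem is stated in the paper as a citation to \cite{DGS}, Theorem~2.2, and the present paper does not give a proof of it; so there is no local argument for me to compare your proposal against. I can only assess your proposal on its own terms and against the paper's remarks about why the theorem should hold.

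Your (1)~$\Rightarrow$~(2) direction is correct and clean: since the metric is $G$-invariant, $\ell(h)=\min_x d(x,hx)$ descends to a class function on $G$, its sublevel sets $S_c=\{\ell<c\}$ are conjugation-stable, and $\Fil^\delta_m\pi_1(X_i)=\langle H_i\cap S_{2\delta}\rangle$. Applying jump equivalence to the pairs $(S_{2\delta},S_{2\delta+2\ve})$ for all $\ve>0$ exactly matches up the jump sets, hence the covering spectra. Your reduction to a nested pair $S\subsetneq T$ with $\langle H_1\cap S\rangle=\langle H_1\cap T\rangle$ and $\langle H_2\cap S\rangle\subsetneq\langle H_2\cap T\rangle$ is also fine, reading your ``cannot coincide with both'' remark as the observation that one of the two choices $(S_0,S_0\cup T_0)$ or $(T_0,S_0\cup T_0)$ must yield a strict inclusion for $H_2$.

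The genuine gap is in (2)~$\Rightarrow$~(1), at precisely the step you flag and then defer. You write that ``$S$ and $T$ must be suitably `saturated' without altering $\langle H_i\cap S\rangle$ or $\langle H_i\cap T\rangle$'' and that you ``expect [this] to be routine.'' But this is the crux of the theorem, and it is not obviously routine. The constraints on which conjugation-stable subsets of $G$ can be sublevel sets of a minimum marked length function go well beyond the power inequality $\ell(h^k)\le|k|\,\ell(h)$ you mention: there are interactions between distinct generators, basepoint dependence in comparing minimizers of different $\ell(h)$, and the requirement that the intermediate thresholds $\delta^{*},\delta^{**}$ be simultaneously compatible for both $H_1$ and $H_2$. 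One must show that a failure of jump equivalence can always be repackaged into a pair $(S,T)$ that the tubular-neighborhood surgery can actually realize, and it is not established here that such a repackaging preserves both $\langle H_1\cap S\rangle=\langle H_1\cap T\rangle$ and $\langle H_2\cap S\rangle\subsetneq\langle H_2\cap T\rangle$. Your geometric sketch (embed disjoint simple closed curve representatives, shrink the tubes, enlarge the complement) does match the paper's own remark that the proof ``depend[s] in part on the fact that all free homotopy classes in dimension three and higher can be represented by simple closed curves,'' so you have identified where $\dim\ge3$ enters; but without the saturation argument and the explicit metric construction the hard implication remains a plan, not a proof.
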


\noindent
We then went on to produce examples of Gassmann-Sunada triples that 
are not jump triples. As a consequence we were able to construct 
Sunada-isospectral manifolds 
with distinct covering spectra in all dimensions greater than or equal to three.
However, our method for building such examples appears to fail in dimension two 
as the arguments underlying Theorem~\ref{Thm:DGS} depend in part on the fact that 
all free homotopy classes in dimension 
three and higher can be represented by simple closed curves.

In this article we take a different approach in order to establish the 
following theorem showing that 
the covering spectrum is not a spectral invariant of a surface.

\begin{thm}\label{Thm:Main}
For each integer $n\ge1$ there exist pairs of closed connected
Sunada-isospectral surfaces of genus $7n+1$ with distinct covering spectra.
\end{thm}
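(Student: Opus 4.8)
The plan is to produce the surfaces as quotients of a common free $G$-space, where $G$ is a finite group carrying a Gassmann–Sunada triple $(G,H_1,H_2)$ that fails to be a jump triple, but to arrange the underlying space to be two-dimensional by using a \emph{Cayley graph} rather than a high-dimensional manifold. Concretely, as the abstract indicates, I would first exhibit a finite group $G$ with subgroups $H_1,H_2$ forming a Gassmann–Sunada triple such that $H_1$ and $H_2$ are \emph{not} jump equivalent in the sense of Theorem~\ref{Thm:DGS}(1): that is, there exist conjugation-stable $S,T\subset G$ with $\langle H_1\cap S\rangle = \langle H_1\cap T\rangle$ but $\langle H_2\cap S\rangle \ne \langle H_2\cap T\rangle$. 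One then builds the Cayley graph $\mathcal{G}$ of $G$ with respect to a suitable generating set, metrizes its edges to obtain a length space on which $G$ acts freely by isometries, and takes the two graph quotients $H_1\bs\mathcal{G}$ and $H_2\bs\mathcal{G}$. Sunada's method (in its combinatorial form, which applies verbatim to the Laplacian on graphs since only the Gassmann condition is used) gives that these two graphs are isospectral. The failure of jump equivalence is exactly what should be engineered to force a difference in the covering spectra of the two length-space structures: a jump in $\delta\mapsto\widetilde{X}^\delta$ occurs at a value governed by which subgroups $\langle H_i\cap S\rangle$ appear, and choosing the edge lengths so that the ``critical'' generators in $S$ and $T$ have the appropriate lengths makes the resulting covering-spectrum jump present on one side and absent on the other (cf. Proposition~\ref{Prop:NewGeo} and the covering spectrum algorithm~\ref{CSA}).

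Having isospectral Cayley graphs with distinct covering spectra, the second half of the argument is to \emph{thicken} them into surfaces in a way that preserves both properties. The standard device is to replace each vertex of the graph by a small disk (or a sphere with boundary circles) and each edge by a tube (a cylinder) glued along boundary circles, i.e., to pass to a ``graph manifold'' or ribbon-surface associated to $\mathcal{G}$; doing this $G$-equivariantly yields a closed surface $\Sigma$ on which $G$ still acts freely and isometrically, with $\Sigma$ a regular $G$-cover of both $H_1\bs\Sigma$ and $H_2\bs\Sigma$. Since the construction is $G$-equivariant and the Gassmann condition is preserved under Sunada's theorem (Theorem~\ref{Thm:SunadaPesce}), the two quotient surfaces are Sunada-isospectral. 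For the covering spectrum one chooses the tubes long and thin enough (relative to the disk sizes) that the minimal closed geodesics realizing the relevant free homotopy classes are forced to follow the core of the tubing, so that $\pi_1(\Sigma)$ and its filtration by the length functional reproduce exactly the combinatorics of the graph's fundamental group; then the covering spectra of the surfaces differ precisely because those of the graphs did. The genus count $7n+1$ comes out of an Euler-characteristic bookkeeping: if $G$ has order $N$ and the Cayley graph is $k$-regular, the cover $\Sigma$ has Euler characteristic $N(2 - k)$ (one sphere-with-$k$-holes per vertex, $Nk/2$ tubes), so $H_i\bs\Sigma$ has Euler characteristic $2-k$ per copy of the fundamental domain scaled by $[G:H_i]$; arranging $[G:H_1]=[G:H_2]$ and choosing $N$, $k$, and $[G:H_i]$ appropriately — presumably $[G:H_i]=7$ with a suitable $3$-valent graph, or an analogous small configuration — yields $\chi = -14n$, hence genus $7n+1$, with the factor $n$ entering by taking $n$ disjoint ``parallel'' copies of the critical sub-configuration (or, equivalently, replacing each critical tube by $n$ tubes).

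The main obstacle I anticipate is the middle step: verifying that the thickening faithfully transports the covering spectrum from the metric graph to the surface. The covering spectrum is sensitive to the precise lengths of shortest representatives of free homotopy classes, and a priori a geodesic on $\Sigma$ can take shortcuts across a thickened vertex disk or wind around a tube in a way with no graph analogue, potentially both creating spurious short loops and shortening the intended ones. Controlling this requires a quantitative comparison: one must show that for suitably chosen tube/disk parameters every non-contractible loop on $\Sigma$ is homotopic to one whose length is within controlled bounds of an edge-path in $\mathcal{G}$, and that the ``jump'' values are separated from all such error terms. This is essentially a Riemannian collar/convexity estimate together with the explicit covering-spectrum computations for graphs, and getting the inequalities to line up is where the real work lies; the group theory producing a Gassmann triple that is not a jump triple is, by contrast, a finite search and is presumably the same family already used in \cite{DGS}, merely reused here with a low-dimensional carrier space in place of the high-dimensional manifold. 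A secondary technical point is checking that the combinatorial Sunada theorem genuinely gives Laplace-isospectrality of the \emph{surfaces} (not just the graphs) — this follows from the equivariant transplantation argument applied to the graph manifold, since transplantation only uses the permutation-module isomorphism $\C[H_1\bs G]\cong\C[H_2\bs G]$ supplied by the Gassmann condition, but it should be stated carefully.
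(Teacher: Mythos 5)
Your overall strategy — Cayley graphs of a Gassmann--Sunada triple, metrized and then thickened into surfaces — is the same route the paper takes, and you have correctly isolated the place where it is hard to make rigorous. But the gap you flagged is a genuine gap, and the paper has a specific device for closing it that is missing from your proposal: Gromov--Hausdorff continuity of the covering spectrum (Proposition~\ref{Prop:GHConv}, due to Sormani--Wei). The paper picks a metric $g^\epsilon$ on the building-block surface $M$ so that the two handles are thin cylinders of circumference $\epsilon$ with core lengths $l_A$ and $l_B$, and the rest of $M$ has diameter $<\epsilon$. As $\epsilon\to0$ the surfaces $(M[V_i;S],g_i^\epsilon)$ converge in the Gromov--Hausdorff sense to the metric graphs $X_i$, so for $\epsilon$ sufficiently small the surfaces' covering spectra are Hausdorff-close to those of $X_1$ and $X_2$ and hence unequal. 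This replaces the entire quantitative collar/convexity argument you anticipate (``every non-contractible loop on $\Sigma$ is homotopic to one whose length is within controlled bounds of an edge-path'') with a single citation; none of those geodesic comparisons need to be carried out. Without that continuity result, your middle step is not a sketch but an open question, and it is exactly the step where the proof would otherwise stall.

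Two further discrepancies worth noting. First, your framing of the group-theoretic input as ``a Gassmann--Sunada triple that is not a jump triple'' does not match what the paper does: Theorem~\ref{Thm:DGS} (the jump-triple criterion) is stated only for simply connected carrier manifolds of dimension $\ge3$, and the introduction explicitly says that argument fails in dimension two. Instead the paper proves Proposition~\ref{Prop:distinct} by a direct hands-on computation: for $0<l_A<l_B<\frac{3}{2}l_A$ it lists all minimal loops in $X_1$ and $X_2$ of length $<l_A+2l_B$, and shows that $l_A+\tfrac12 l_B$ is a jump of $\Fil^\bullet_m\pi_1(X_1)$ but not of $\Fil^\bullet_m\pi_1(X_2)$ (the latter by exhibiting a loop of length $2l_A+l_B$ in $X_2$ already in the subgroup generated by shorter ones, the former by contracting $\sigma_1,\ldots,\sigma_5$ to a point and checking $\sigma_6$ survives). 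Second, your mechanism for realizing every genus $7n+1$ (``taking $n$ disjoint parallel copies of the critical sub-configuration'' or ``replacing each critical tube by $n$ tubes'') changes the Cayley graph and hence the covering spectrum in ways you would have to re-analyze. The paper does something strictly simpler: the building block $M$ is a genus-$g$ surface with two extra handles, giving $M[V_i;S]$ genus $1+(g+1)\cdot 7$; letting $g=n-1$ range over all nonnegative integers hits every value $7n+1$ with $n\ge1$, while leaving the Gassmann--Sunada structure, the Gromov--Hausdorff limit, and Proposition~\ref{Prop:distinct} untouched.
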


To prove the result above we construct certain Cayley graphs 
associated to a well-known Gassmann-Sunada 
triple \cite[Ex.~11.4.7.]{Buser},
and endow them with suitably chosen edge lengths
so that the resulting length spaces $X_1$ and $X_2$ 
have distinct covering spectra. 
We then build continuous families of Riemannian 
manifolds $\{(M_1, g_1^\epsilon)\}_{\epsilon > 0}$ 
and $\{(M_2, g_2^\epsilon)\}_{\epsilon > 0}$ that are pairwise 
Sunada-isospectral for each $\epsilon > 0$ and 
such that for each $i = 1,2$ the family $(M_i, g_i^\epsilon)$ 
converges with respect to the Gromov-Hausdorff metric to $X_i$ as $\epsilon$ tends to zero.
Then, since the covering spectrum is continuous under 
Gromov-Hausdorff convergence \cite[Corollary 8.5]{SW},
it follows that for $\epsilon$ sufficiently small $(M_1, g_1^\epsilon)$ 
and $(M_2, g_2^\epsilon)$ are Sunada-isospectral manifolds with distinct 
covering spectra.

Currently, it is not clear to us whether any of the aforementioned pairs 
consist of Riemann surfaces. 
It would appear to be an interesting problem to 
determine whether the covering spectrum is a spectral invariant 
among such spaces.

The outline of this article is as follows. In Section~\ref{Sec:Covering} we 
will formulate the precise definition of the covering spectrum, 
present a useful algorithm for computing it, and discuss its 
behavior under Gromov-Hausdorff convergence. 
In Section~\ref{Sec:Cayley} we recall the notion of a Cayley graph, we
produce the length spaces $X_1$ and $X_2$ mentioned above, and show their covering spectra are distinct.
In Section \ref{Sec:Surfaces} we review how the Cayley graphs relate
to the isospectral problem, and we finish the proof of Theorem \ref{Thm:Main}
as outlined.

%%%%%%%%%%%%%%%%%%%%%%%%%%%%%%%%%%%%%%%%%%%%%%%
%%%%%%%%%%%%%%%%%%%      Covering Spectrum   %%%%%%%%%%%%%%%
%%%%%%%%%%%%%%%%%%%%%%%%%%%%%%%%%%%%%%%%%%%%%%%

\section{The Covering Spectrum}\label{Sec:Covering}

In this section we will recall the definition of the covering spectrum as given in \cite{SW}.
Since the primary focus of this paper will be the covering spectra of compact
semi-locally simply-connected length spaces we will only formulate our definition 
in this setting. However, we note that in \cite{DGS} we 
extended this definition of the covering spectrum to arbitrary metric spaces. 

We will begin by recalling the notion of a filtration of a group.

\begin{dfn}
Let $G$ be a group. A \emph{filtration} $\Fil^\bullet G$ of $G$ is 
a family $\{\Fil^{i} G\}_{i \in I}$ of subgroups of $G$, where $i$ ranges 
over an ordered index set $I$, such that $\Fil^i G \subset \Fil^j G$ whenever $i < j$.
The \emph{jump set} of the filtration is the subset of $I$ given by
$$\Jump ( \Fil^\bullet G) = \{ i \in I : \Fil^i G \neq \Fil^j G \mbox{ for all } j > i \}.$$
\end{dfn}

\begin{exa}[Filtrations Induced by Class Functions]\label{Exa:Filtration}
Let $G$ be a group and $m: G \to \nnR$ a class function; i.e., 
$m$ is constant on the conjugacy classes of $G$. Then $m$ induces 
a filtration $\Fil^\bullet_m G = \{\Fil^{\delta}_{m} G \}_{\delta > 0}$, where
$$\Fil^\delta_m G := \langle g \in G : m(g) < \delta \rangle.$$
Such filtrations will prove to be useful in our discussion of the covering spectrum.
\end{exa}

We now observe that given a connected and compact semi-locally 
simply-connected length space $(X,d)$ its metric structure induces 
a natural filtration (indexed by the positive reals) on 
its fundamental group $\pi_1(X, x_0)$. Indeed, for each $\delta > 0$ 
let $\mU_{\delta}$ be the open cover of $X$ consisting of all the 
open $\delta$-balls in $(X,d)$. Then let $\Fil^\delta \pi_1(X, x_0)$ 
be the subgroup of $\pi_1(X, x_0)$ generated by classes 
of the form $[\alpha * \beta * \bar{\alpha}]$, where $\beta : [0, 1] \to X$ 
is a loop that is completely contained in some open $\delta$-ball, 
$\alpha$ is a path from $x_0$ to $\beta(0)$ and $\bar{\alpha}(t) = \alpha(1-t)$ 
is its reverse. 
The subgroup $\Fil^\delta \pi_1(X, x_0)$  is normal in $\pi_1(X, x_0)$ and 
the conjugacy classes of the generators $[\alpha * \beta * \bar{\alpha}]$ 
of $\Fil^\delta \pi_1(X, x_0)$ represent the free homotopy classes 
of loops that have representatives completely contained in a $\delta$-ball. In this fashion we obtain 
a filtration $\Fil^\bullet \pi_1(X, x_0) = \{ \Fil^\delta \pi_1(X, x_0) \}_{\delta > 0}$ 
of $\pi_1(X, x_0)$.

\begin{dfn}
Let $(X,d)$ be a connected and compact semi-locally simply-connected length space. 
We then define the \emph{covering spectrum} of $(X,d)$ to be the set 
$$\CovSpec(X,d) = \Jump (\Fil^\bullet \pi_1(X, x_0)).$$
\end{dfn}

Now, since each subgroup $\Fil^\delta \pi_1(X, x_0)$ of 
$\pi_1(X, x_0)$ is normal, the Galois theory of covering spaces \cite[Theorem 2.5.13]{Spanier} tells us 
that we may associate to each subgroup a connected regular covering 
space $\tX^\delta \to X$ known as the \emph{$\delta$-cover} of $X$. 
The $\delta$-cover has the property that if $Y \to X$ is any covering space 
that evenly covers every $\delta$-ball of $X$, then $\tX^\delta$ is a covering space of $Y$ (cf. \cite[Lemma 2.5.11]{Spanier}).
Hence, we obtain a tower $\{\tX^\delta \}_{\delta > 0}$ of regular covers of $X$ 
and the covering spectrum detects when we have a change in the isomorphism 
type of the cover. That is, $\delta \in \CovSpec(X,d)$ if and only if for 
all $\epsilon > \delta$, $\tX^\epsilon$ and $\tX^\delta$ are non-isomorphic as
covering spaces. In the event that we are considering a 
compact Riemannian manifold $(M,g)$ we see that the covering spectrum 
also contains information about the length spectrum.

\begin{prop}[\cite{DGS} Corollary 4.5]\label{Prop:NewGeo}
Let $(M,g)$ be a closed Riemannian manifold. Then 
\begin{enumerate}
\item $2 \CovSpec(M,g)$ is a subset of the length spectrum.
\item if $\delta \in \CovSpec(M,g)$, then $2\delta$ is the 
length of the shortest (smoothly) closed geodesic in $(M,g)$ that has a 
lift to $\tM^{\delta}$ that is not a closed loop. 
\end{enumerate}
\end{prop}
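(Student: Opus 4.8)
The plan is to prove (1) and (2) at once. Put $F := \Fil^\delta\pi_1(M,x_0)$; since $\delta \in \CovSpec(M,g) = \Jump(\Fil^\bullet\pi_1(M,x_0))$ forces $F \ne \pi_1(M,x_0)$, the number
\[
L_0 := \inf\bigl\{\ell(C) : C \text{ a conjugacy class of } \pi_1(M,x_0) \text{ with } C \nsubseteq F\bigr\}
\]
is finite, where $\ell(C)$ denotes the length of the shortest closed geodesic in the free homotopy class corresponding to $C$. By covering-space theory, a closed geodesic $\gamma$ has a lift to $\tM^\delta$ that is not a closed loop exactly when its conjugacy class is not contained in $F$; so it is enough to show $L_0 = 2\delta$ and that the infimum is attained — then (2) follows, and (1) follows because $2\delta$ will then be a length of a closed geodesic. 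I would first record the elementary fact behind the easy inequality: a loop $\sigma$ of length $< 2\rho$ based at a point $p$ stays within distance $< \rho$ of $p$ (go around $\sigma$ from $p$ the short way), hence lies in the open $\rho$-ball about $p$; joining $p$ to $x_0$, the class of $\sigma$ lies in $\Fil^\rho \pi_1(M,x_0)$, and since that subgroup is normal so does the whole conjugacy class of $\sigma$. Taking $\rho = \delta$, this shows every conjugacy class $C$ with $\ell(C) < 2\delta$ satisfies $C \subseteq F$; that is, $L_0 \ge 2\delta$.

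For the reverse inequality $L_0 \le 2\delta$ I would argue by contradiction. Assume $L_0 > 2\delta$, fix $\delta'$ with $\delta < \delta' < L_0/2$, and set $\epsilon := L_0 - 2\delta' > 0$; the claim is that $\Fil^{\delta'}\pi_1(M,x_0) = F$, contradicting $\delta \in \Jump(\Fil^\bullet\pi_1(M,x_0))$ since $\delta' > \delta$. A generator of $\Fil^{\delta'}\pi_1(M,x_0)$ arises from a loop $\beta$ lying in some open $\delta'$-ball $B(q,\delta')$; we may take $\beta$ to be piecewise geodesic and still contained in $B(q,\delta')$. Subdivide $\beta$ into arcs of length $< \epsilon$, join $q$ to each subdivision point by a minimizing geodesic (of length $< \delta'$, since the point lies in $B(q,\delta')$), and form the resulting ``lasso'' loops $\lambda_1,\dots,\lambda_k$ based at $q$. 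Then the class of $\beta$ is a product of elements belonging to the conjugacy classes $C_{\lambda_1},\dots,C_{\lambda_k}$, and each $\lambda_i$ has length $< 2\delta' + \epsilon = L_0$. Hence $\ell(C_{\lambda_i}) < L_0$, so by the definition of $L_0$ every $C_{\lambda_i}$ is contained in $F$, and therefore so is the class of $\beta$. As $\beta$ was arbitrary, $\Fil^{\delta'}\pi_1(M,x_0) \subseteq F$, and equality holds by monotonicity of the filtration, giving the contradiction.

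It remains to see, once $L_0 = 2\delta$, that the infimum is attained. Choose conjugacy classes $C_n \nsubseteq F$ with $\ell(C_n) \to L_0$ and let $\gamma_n$ be a shortest closed geodesic in $C_n$, which has length $\ell(C_n)$. These geodesics have lengths that are uniformly bounded and, by the first paragraph, bounded below by $2\delta$, so their initial velocity vectors lie in a compact subset of $TM$; passing to a subsequence, $\gamma_n$ converges in $C^1$ to a closed geodesic $\gamma$ of length $L_0 = 2\delta$. For $n$ large, $\gamma_n$ is $C^0$-close to $\gamma$ and hence freely homotopic to it, so $C_\gamma = C_n \nsubseteq F$; that is, $\gamma$ is a closed geodesic of length $2\delta$ whose lift to $\tM^\delta$ is not a closed loop. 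Combined with the bound $\ell(C) \ge 2\delta$ for every conjugacy class $C \nsubseteq F$, the geodesic $\gamma$ is a shortest closed geodesic whose lift to $\tM^\delta$ fails to close up, and it has length $2\delta$. This gives (2), and (1) follows immediately.

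The step I expect to be the crux is the decomposition in the second paragraph. A loop contained in a small ball can wind around inside it many times and thus be arbitrarily long, so one cannot use $\beta$ itself, and it is not obvious a priori that $\Fil^{\delta'}\pi_1(M,x_0)$ is generated, modulo $F$, by classes having short free-homotopy representatives. The point is that the lasso loops obtained from $\beta$ by running minimizing spokes to the center $q$ of the ball automatically have length below $2\delta' + \epsilon$, so the slack $\epsilon = L_0 - 2\delta'$, which is positive precisely when $L_0 > 2\delta'$, is exactly what is needed to push the minimal-geodesic lengths of the lasso classes below the threshold $L_0$. The ancillary points — that $\beta$ may be replaced by a piecewise-geodesic loop in the same ball and that moving basepoints only conjugates the lasso loops, leaving their free homotopy classes unchanged — are routine.
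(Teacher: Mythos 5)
The paper does not actually prove this statement in the text; it is imported from the reference [DGS, Cor.~4.5], so there is no in-paper argument to compare against. On its own merits your proof is correct. What you have done, in effect, is give a self-contained derivation that re-establishes the two ingredients that [DGS] would instead cite. Your lasso decomposition (second paragraph) is essentially a proof that the ball-filtration $\Fil^\delta\pi_1(M,x_0)$ coincides with the filtration $\Fil^{2\delta}_m\pi_1(M,x_0)$ induced by the minimum marked length map $m$, which is the content behind Proposition~\ref{prop:filter}; and your geodesic-compactness argument (third paragraph) reproves closedness and discreteness of the image of $m$, which the paper cites from [SW, Lemma~4.6]. With those two facts in hand the result falls out more quickly: a jump of $\Fil^\bullet_m$ at $2\delta$ forces, by discreteness, that $2\delta$ is a value of $m$, realized by a closed geodesic whose class does not lie in $\Fil^\delta\pi_1(M,x_0)$, and the lower bound is the same easy inequality you record. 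Your version buys self-containedness at the cost of length. Two of your ``routine'' steps genuinely do need a word: the piecewise-geodesic replacement can be kept inside $B(q,\delta')$ only because the compact image of $\beta$ actually lies in $B(q,\delta'-\epsilon_0)$ for some $\epsilon_0>0$ (metric balls need not be convex), and the free homotopy between $C^1$-close closed geodesics should be justified, e.g., by the usual straight-line homotopy through minimizing connecting geodesics below the injectivity radius. Neither is a gap, but both merit a sentence.
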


In order to compute the length spectrum it will be useful 
to introduce the minimum marked length map.

\begin{dfn}
Let $(X,d)$ be a connected and compact semi-locally 
simply-connected length space with fundamental group $\pi_1(X)$ and
simply-connected universal cover $(\tX, \tilde{d})$. 
Then the \emph{minimum marked length map} 
$m: \pi_1(X) \to \nnR$ is given by 
$$m([\gamma]) \equiv \inf_{\tilde{x} \in \tX} \tilde{d}(\tilde{x}, \gamma \cdot \tilde{x}),$$
where $\pi_1(X)$ acts on $\tX$ via deck transformations.
\end{dfn}

\noindent
The minimum marked length map $m: \pi_1(X) \to \nnR$ assigns 
to each $[\gamma] \in \pi_1(X)$
the length of the shortest closed geodesic in the free 
homotopy class determined by $[\gamma]$.
Since free homotopy classes correspond to conjugacy classes 
in $\pi_1(X)$ we see that $m$ is a class function.
Hence, by Example~\ref{Exa:Filtration} $m$ induces the 
filtration $\Fil^\bullet_m \pi_1(X)$ and we have the following.

\begin{prop}[cf. \cite{DGS} Proposition 2.5]\label{prop:filter}
Let $(X,d)$ be a connected and compact semi-locally 
simply-connected length space with minimum marked length 
map $m : \pi_1(X) \to \nnR$. Then 
$$\CovSpec(X,d) = \frac{1}{2} \Jump( \Fil^\bullet_m \pi_1(X)).$$ 
\end{prop}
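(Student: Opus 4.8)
The plan is to show that the two filtrations of $\pi_1(X)$ in play---the metrically defined filtration $\Fil^\bullet \pi_1(X, x_0)$ built from loops in $\delta$-balls, and the filtration $\Fil^\bullet_m \pi_1(X)$ induced by the minimum marked length map---have the same jump set, up to the factor of $2$. Since $\CovSpec(X,d) = \Jump(\Fil^\bullet \pi_1(X,x_0))$ by definition, it suffices to prove that
$$\Fil^\delta \pi_1(X, x_0) = \Fil^{2\delta}_m \pi_1(X) \quad \text{for every } \delta > 0,$$
for then the two towers of subgroups agree after rescaling $\delta \mapsto 2\delta$, and hence their jump sets satisfy $\Jump(\Fil^\bullet \pi_1(X,x_0)) = \tfrac12 \Jump(\Fil^\bullet_m \pi_1(X))$. (Strictly speaking this is the content of \cite[Proposition 2.5]{DGS}, which we are asked to reprove; I would cite it but sketch the argument.)

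First I would unwind both sides. The subgroup $\Fil^\delta \pi_1(X,x_0)$ is generated by classes $[\alpha * \beta * \bar\alpha]$ where $\beta$ is a loop contained in some open $\delta$-ball; as noted in the text, the conjugacy classes of these generators are exactly the free homotopy classes of loops admitting a representative inside a $\delta$-ball. On the other side, $\Fil^{2\delta}_m \pi_1(X) = \langle [\gamma] : m([\gamma]) < 2\delta \rangle$, and $m([\gamma])$ is the length of the shortest closed geodesic freely homotopic to $\gamma$. So the core claim reduces to: a free homotopy class $[\gamma]$ has a representative lying in some open $\delta$-ball if and only if $m([\gamma]) < 2\delta$. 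For the ``only if'' direction: a loop of the form $\beta$ inside an open ball $B(p,\delta)$ can be written as $\eta_1 * \bar\eta_2$ where $\eta_i$ are paths from $\beta(0)$ to $p$ of length $< \delta$ (run out to $p$ and back along the two arcs), wait---more carefully, lift to the universal cover: $\gamma$ has a free homotopy representative of length $< 2\delta$ because a loop based at $\beta(0)$ inside $B(p,\delta)$ is homotopic rel basepoint, through loops in the ball, to $\eta * \bar\eta$... the cleanest route is to note $\diam B(p,\delta) \le 2\delta$ is not quite enough; instead, observe that any $\gamma$ representable in $B(p,\delta)$ has, after closing up near $p$, a representative of length strictly less than $2\delta$ (go from the point to $p$, loop trivially, come back—but that's nullhomotopic). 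The right statement: a loop $\beta\subset B(p,\delta)$ is freely homotopic to a loop of length $<2\delta$ because one may push $\beta$ so it passes through $p$; then it consists of two arcs from $p$ to $\beta(0)$ and back, each of length $<\delta$ since they lie in the ball. This gives a representative of length $<2\delta$, hence $m([\gamma]) < 2\delta$. For the ``if'' direction: if $m([\gamma]) < 2\delta$, pick a representative closed geodesic $c$ of length $L < 2\delta$; then $c$ lies in the open ball of radius $\delta$ centered at any of its points, since every point of $c$ is within arclength $L/2 < \delta$ of that center.

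The main obstacle I expect is the interchange between \emph{free} homotopy and \emph{based} homotopy: the metric filtration is defined via based loops $[\alpha*\beta*\bar\alpha]$ generating a normal subgroup, while $m$ is a class function, so one needs the observation (already recorded in the text) that the normal subgroup generated by all conjugates of the $\delta$-ball generators coincides with the subgroup generated by all $[\gamma]$ whose free homotopy class is ``$\delta$-short.'' Once one knows the generating \emph{sets} are closed under conjugation and generate the same normal subgroup, equality of the two filtrations follows. A secondary technical point is handling infima versus minima in the definition of $m$: since $(X,d)$ is compact, the infimum defining $m([\gamma])$ is attained by a closed geodesic, so ``$m([\gamma]) < 2\delta$'' genuinely yields an honest short geodesic representative, which is what the ``if'' direction uses; without compactness one would only get representatives of length arbitrarily close to $m([\gamma])$, which still suffices for a strict inequality but requires a little more care. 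Assembling these pieces gives $\Fil^\delta \pi_1(X,x_0) = \Fil^{2\delta}_m\pi_1(X)$ for all $\delta$, and the proposition follows by taking jump sets.
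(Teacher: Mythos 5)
The paper offers no proof of this proposition---it cites \cite[Prop.~2.5]{DGS} and moves on---so there is no in-paper argument to compare against; what follows assesses your proposal on its own terms. Your reduction is the right one: since $\CovSpec(X,d) = \Jump(\Fil^\bullet\pi_1(X,x_0))$ by definition, it suffices to prove $\Fil^\delta\pi_1(X,x_0) = \Fil^{2\delta}_m\pi_1(X)$ for every $\delta>0$. The ``if'' direction you give is correct: if $m([\gamma])<2\delta$, a minimal closed geodesic $c$ of length $L<2\delta$ lies in $B(q,\delta)$ for any $q$ on $c$ (every point of $c$ is within arclength $L/2<\delta$ of $q$), and since the generating set of $\Fil^\delta\pi_1$ is closed under conjugation this gives $[\gamma]\in\Fil^\delta\pi_1$.

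The ``only if'' direction as you wrote it contains a genuine error. You assert that any loop $\beta\subset B(p,\delta)$ is freely homotopic to a loop of length $<2\delta$, arguing that after pushing $\beta$ through $p$ it ``consists of two arcs from $p$ to $\beta(0)$ and back, each of length $<\delta$ since they lie in the ball.'' But containment in a $\delta$-ball does not bound arclength: an arc inside $B(p,\delta)$ can wind around and be arbitrarily long. Indeed the equivalence you reduce to---$[\gamma]$ has a representative in a $\delta$-ball iff $m([\gamma])<2\delta$---is simply false. On the flat unit torus $\R^2/\Z^2$, whose diameter is $1/\sqrt2<0.71$, take $\delta=0.75$: then $B(p,\delta)$ is all of the torus, so every free homotopy class has a representative inside it, yet the class $(2,0)$ has $m=2>1.5=2\delta$. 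The two generating sets are genuinely different; it is only the generated subgroups that coincide. The repair is to \emph{decompose} $\beta$ rather than shorten it. Since $\beta$ is compact and $B(p,\delta)$ open, there is $\epsilon_0>0$ with $d(p,\beta(t))\le\delta-\epsilon_0$ for all $t$. Choose a partition $0=t_0<\cdots<t_n=L$ of mesh $<2\epsilon_0$ and paths $\sigma_i$ from $p$ to $\beta(t_i)$ of length $<\delta-\epsilon_0$, with $\sigma_n=\sigma_0$. Each loop $\gamma_i=\sigma_i*\beta|_{[t_i,t_{i+1}]}*\bar\sigma_{i+1}$ based at $p$ has length $<2(\delta-\epsilon_0)+2\epsilon_0=2\delta$, so $m([\gamma_i])<2\delta$, and $[\sigma_0*\beta*\bar\sigma_0]=[\gamma_0]\cdots[\gamma_{n-1}]\in\Fil^{2\delta}_m\pi_1$. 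Normality of $\Fil^{2\delta}_m\pi_1$ then puts the original generator $[\alpha*\beta*\bar\alpha]$ in it as well, giving $\Fil^\delta\pi_1\subseteq\Fil^{2\delta}_m\pi_1$. With this fix your outline closes.
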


The above leads to the 
following algorithm for computing the covering spectrum of 
a compact semi-locally simply-connected length space as found in \cite[p. 54]{SW} and 
\cite[4.6]{DGS}.

\begin{csa}\label{CSA}
Let $(X,d)$ be a compact semi-locally 
simply-connected length space with minimum marked length map
$m: \pi_1(X) \to \nnR$. Then let
$$
\begin{array}{l}
\delta_1=\inf\{m(\sigma)\colon \sigma \in \pi_1(X), \sigma \ne 1\} > 0 \\
\delta_2=\inf\{m(\sigma)\colon \sigma \in \pi_1(X), \sigma \not\in \langle  \tau \in \pi_1(X) \colon\;
m(\tau)\le\delta_1\rangle \} > \delta_1\\ 
\delta_3=\inf\{m(\sigma)\colon \sigma \in \pi_1(X), \sigma \not\in \langle  \tau \in \pi_1(X) \colon\;
m(\tau)\le\delta_2\rangle \} > \delta_2\\ 
\cdots
\end{array}
$$
where we continue until $\langle  \sigma \in \pi_1(X) \colon\; m(\sigma) \le \delta_r \rangle = \pi_1(X)$.
Now since $X$ is a compact length space we know by \cite[Lemma 4.6]{SW} 
that the image of $m$ is closed and discrete. It then follows from the fact that $\pi_1(X)$ 
is finitely generated that the above process stops in finitely many steps at some $\delta_r$, 
and by the previous proposition we see
$$\CovSpec(X,d) = \{ \frac{1}{2} \delta_1 , \ldots , \frac{1}{2}\delta_r \}.$$
\end{csa}

\begin{rem}
For an application of the covering spectrum algorithm to the case of flat tori the reader is encouraged to see \cite[Example 4.7]{DGS}.
In particular, this example shows that for a flat $n$-torus, the covering spectrum need not be equal to the 
successive minima of the corresponding lattice $\mL$ in $\R^n$.
\end{rem}

We conclude this section by recalling that in Proposition~\ref{Prop:NewGeo}
we observed that for any compact length space $X$ we have 
$2\CovSpec(X) \subset \ALSpec(X)$, 
where $\ALSpec(X)$ denotes the length spectrum of $X$.
Now, it is well known that the length spectrum does not 
behave well under Gromov-Hausdorff 
convergence: lengths may disappear or 
suddenly appear in the limit (cf. \cite[Examples 8.1-8.3]{SW}).
However, in Section~\ref{Sec:Cayley} we will use the fact that, in contrast, 
the covering spectrum is continuous under Gromov-Hausdorff convergence.

\begin{prop}[\cite{SW} Corollary 8.5]\label{Prop:GHConv}
If $X_i$ is a sequence of compact length spaces converging to a compact length 
space $Y$ in the Gromov-Hausdorff sense, then the covering spectra converge in the 
Hausdorff sense as subsets of $\R$. That is, if $X_i \stackrel{\rm{GH}}{\longrightarrow}Y$, then 
$$\lim_{i \to \infty} d_{H}( \CovSpec(X_i) \cup \{0\}, \CovSpec(Y)\cup\{0\} ) = 0,$$
where $d_H$ denotes the Hausdorff distance between subsets of $\R$.
\end{prop}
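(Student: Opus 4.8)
The plan is to reduce the Hausdorff convergence of the covering spectra to a single convergence statement for the individual $\delta$-covers, and then to read off both halves of the Hausdorff estimate by comparing $\delta$-covers at nearby parameters. First observe that for any compact length space $X$ one has $\CovSpec(X)\subseteq(0,\diam(X)]$: if $\delta>\diam(X)$ then every loop of $X$ lies in a single $\delta$-ball, so $\Fil^\delta\pi_1(X,x_0)=\pi_1(X,x_0)$ and there are no jumps beyond $\diam(X)$. Moreover, by the covering spectrum algorithm \ref{CSA}, $\CovSpec(X)$ is a finite set. Since $\diam(X_i)\to\diam(Y)<\infty$ under Gromov--Hausdorff convergence, all the sets $\CovSpec(X_i)$ and $\CovSpec(Y)$ lie in a common bounded interval, and (since $d_H$ on subsets of a bounded interval is small exactly when each set lies in a small neighbourhood of the other) it suffices to prove: (i) for every $\eta>0$ and all large $i$, $\CovSpec(X_i)$ is contained in the $\eta$-neighbourhood of $\CovSpec(Y)\cup\{0\}$; and (ii) for every $\eta>0$ and all large $i$, each element of $\CovSpec(Y)$ lies within $\eta$ of $\CovSpec(X_i)\cup\{0\}$. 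The point $0$ is genuinely needed in (i): the shortest noncontractible loop of $X_i$ — that is, twice the least element of $\CovSpec(X_i)$ — may shrink to $0$ when a thin handle collapses.

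The technical heart is a convergence property of $\delta$-covers, carried out in Section~8 of \cite{SW}, in which the hypothesis $\delta\notin\CovSpec(Y)$ plays the decisive role: if $X_i\stackrel{\rm{GH}}{\longrightarrow}Y$ and $\delta\notin\CovSpec(Y)$, then the pointed $\delta$-covers $(\tX_i^\delta,\tilde p_i)$ converge, equivariantly and in the pointed Gromov--Hausdorff sense, to $(\widetilde{Y}^{\,\delta},\tilde p)$; consequently, for any $a<b$ with $a,b\notin\CovSpec(Y)$, the covers $\widetilde{Y}^{\,a}$ and $\widetilde{Y}^{\,b}$ of $Y$ are isomorphic if and only if $\tX_i^a$ and $\tX_i^b$ are isomorphic covers of $X_i$ for all large $i$. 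The reason $\delta\notin\CovSpec(Y)$ matters is that this means the filtration $\delta\mapsto\Fil^\delta\pi_1(Y,y_0)$ is constant on an open interval around $\delta$, so $\widetilde{Y}^{\,\delta-\eta}=\widetilde{Y}^{\,\delta}=\widetilde{Y}^{\,\delta+\eta}$ for small $\eta>0$; one then transports $\epsilon_i$-isometries $X_i\to Y$ (with $\epsilon_i\to0$) to maps of $\delta$-covers, using that a loop lying in a $\delta$-ball of $X_i$ is carried into a $(\delta+\epsilon_i)$-ball of $Y$ and conversely, and absorbing the $\epsilon_i$-error thanks to this local constancy. Tracking the deck transformations along the way ensures that the isomorphism type of the cover, not merely its metric, passes to the limit. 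I expect this to be the main obstacle, precisely because the $\delta$-covers are in general noncompact, so one must work with pointed (equivariant) Gromov--Hausdorff convergence and argue that the limit is the correct cover — which is exactly what the stability interval around $\delta$ furnishes.

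Granting this, statement (i) follows by contradiction. If it fails, then after passing to a subsequence there are $\delta_i\in\CovSpec(X_i)$ at distance at least some fixed $\eta_0>0$ from $\CovSpec(Y)\cup\{0\}$; by the uniform bound a further subsequence converges to some $\delta_*\notin\CovSpec(Y)\cup\{0\}$. Choose $a<\delta_*<b$ with $[a,b]\cap\CovSpec(Y)=\emptyset$ and $a,b\notin\CovSpec(Y)$, which is possible since $\CovSpec(Y)$ is finite and $\delta_*\notin\CovSpec(Y)$. Then $\widetilde{Y}^{\,a}$ and $\widetilde{Y}^{\,b}$ are isomorphic covers of $Y$ (no jump of $Y$ in $(a,b)$), so by the convergence property $\tX_i^a\cong\tX_i^b$ for all large $i$; hence $\Fil^a\pi_1(X_i,x_0)=\Fil^b\pi_1(X_i,x_0)$, so $\CovSpec(X_i)\cap(a,b)=\emptyset$, contradicting $\delta_i\in(a,b)\cap\CovSpec(X_i)$ for large $i$.

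For statement (ii), fix $\beta\in\CovSpec(Y)$ with $\beta>0$ and, using the finiteness of $\CovSpec(Y)$, choose $a<\beta<b$ with $b-a<2\eta$, $[a,b]\cap\CovSpec(Y)=\{\beta\}$, and $a,b\notin\CovSpec(Y)$. Then $\widetilde{Y}^{\,a}$ and $\widetilde{Y}^{\,b}$ are non-isomorphic covers of $Y$. If, along some subsequence, $\CovSpec(X_i)\cap(a,b)=\emptyset$, then $\tX_i^a\cong\tX_i^b$ for those $i$; passing to the limit via the convergence property and the uniqueness of equivariant pointed Gromov--Hausdorff limits would force $\widetilde{Y}^{\,a}\cong\widetilde{Y}^{\,b}$, a contradiction. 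Hence for all large $i$ there is an element of $\CovSpec(X_i)$ in $(a,b)$, in particular within $\eta$ of $\beta$. Applying this to each of the finitely many elements of $\CovSpec(Y)$ and taking the largest of the resulting thresholds gives (ii), and together with (i) this proves the proposition.
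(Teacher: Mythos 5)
The paper does not prove this proposition: it is quoted directly as Corollary~8.5 of \cite{SW}, and the authors explicitly refer the reader to Sections~7 and~8 of \cite{SW} for the argument. Your reconstruction therefore has no proof in the paper against which it can be compared.

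As a sketch of the argument in \cite{SW}, your plan has the right shape (finiteness of both covering spectra, reduction to the two one-sided Hausdorff containments, and derivation of both from convergence of $\delta$-covers at parameters off $\CovSpec(Y)$), and your remark about why $\cup\{0\}$ is needed is correct. The gap is in the step where you conclude, ``consequently,'' that for $a,b\notin\CovSpec(Y)$ the covers $\widetilde{Y}^{\,a}$ and $\widetilde{Y}^{\,b}$ are isomorphic if and only if $\tX_i^a\cong\tX_i^b$ for all large $i$. The forward implication does not follow from the mere fact that $\tX_i^a$ and $\tX_i^b$ converge, pointedly and equivariantly, to the common limit $\widetilde{Y}^{\,a}=\widetilde{Y}^{\,b}$: two sequences of covers can converge to the same limit space while being mutually non-isomorphic at every finite $i$, which is exactly the phenomenon that makes the full length spectrum discontinuous under Gromov--Hausdorff convergence. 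What actually carries the argument is the loop-transfer estimate you mention but then do not deploy: an $\epsilon_i$-GH-approximation carries any loop contained in a $\delta$-ball of $X_i$ to one contained in a $(\delta+2\epsilon_i)$-ball of $Y$ and conversely, and the constancy of $\widetilde{Y}^{\,\delta}$ on a neighbourhood of $[a,b]$ then absorbs the $\epsilon_i$ slack and yields $\Fil^a\pi_1(X_i,x_0)=\Fil^b\pi_1(X_i,x_0)$ directly, without passing through a limit of covers at all. That derivation is the substantive content of Section~8 of \cite{SW}, and it needs to be carried out rather than asserted as a consequence.
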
 

\noindent
Hence, twice the covering spectrum determines a part of the length 
spectrum of a compact length space that behaves well 
under Gromov-Hausdorff convergence.

\begin{rem}
We refer the reader to Sections 7 and 8 of \cite{SW} for 
the relevant details on Gromov-Hausdorff convergence and the proof of the 
proposition above. 
\end{rem}
%%%%%%%%%%%%%%%%%%%%%%%%%%%%%%%%%%%%%%%%%
%%%%%%%%%%%%%%%%   Main results   %%%%%%%%%%%%%%%%%
%%%%%%%%%%%%%%%%%%%%%%%%%%%%%%%%%%%%%%%%%

\section{Cayley Graphs associated to the Fano plane}\label{Sec:Cayley}
\noindent

In this section we will introduce certain Cayley graphs and associated length
spaces. In the next section we will use these examples to construct
isospectral surfaces with distinct covering spectra.

By a \emph{graph} we will mean a quadruple $(V,E, o,t)$ where $V$ and $E$ are
the sets of vertices and edges (respectively), and $o$ and $t$ are maps from $E$ to $V$
associating to each edge its origin and target.  
In other words, we consider directed graphs with possible self-edges and
multiple edges between the same vertices.  
A morphism $f$ from a graph $(V,E,o,t)$ to a graph $(V',E',o',t')$ is a pair of
maps $f_v\colon\;V\to V'$ and $f_e\colon\;E\to E'$ for which $o'\circ f_e=
f_v\circ o$ and $t'\circ f_e= f_v\circ t$.

If $G$ is a group acting \emph{on the right} on a set $V$, and $S\subset G$ is
a subset, then we can consider the (\emph{generalized}) \emph{Cayley graph} $V[S]=(V,E,o,t)$ associated to $V$ and $S$, where
$E=V\times S$ with $o(v,s)=v$ and $t(v,s)=vs$.  Note that we can label the
edges by $S$ according to their second coordinate---this will be referred to 
as the \emph{color} or \emph{type} of the edge---and that for every
$s\in S$ every vertex has a single incoming and outgoing edge of type $s$.
%namely, $(v,s)$ and $(vs^{-1},s)$, respectively.

If for $V$ we take a finitely generated group $G$ with the $G$-action by right multiplication
and we let $S$ be a set of generators of $G$,
then we obtain the usual Cayley graph $G[S]$ of the group $G$ (cf. \cite[Chp. 4]{Harpe}).  The group $G$ then has a left-action
by graph automorphisms on $G[S]$ where $G$ acts on the vertices by left
multiplication. The action preserves the labeling of the edges by $S$.
For any subgroup $H$ of $G$ the quotient graph $H\backslash
(G[S])$ is then naturally isomorphic to the generalized Cayley graph $(H\backslash G)[S]$.
The graph $(H\backslash G)[S]$ is also known as the \emph{Schreier graph} associated to $H \backslash G$ and $S$ (cf. \cite[Chp. 4]{Harpe}).

In our application, the group $G$ will be the group $G=\Gl_3(\ff_2)$, which we
will consider with its right action on the Fano plane, the projective plane
over the field $\ff_2$ of $2$ elements.
The set $V_1$ of points of the Fano plane is labeled by non-zero
row vectors of length $3$ over $\ff_2$, which are multiplied on the right by
matrices in $G$. 
A second set $V_2$ with right $G$-action that we will consider is the
set of lines in the Fano plane. We can label $V_2$ by the same set
of non-zero vectors in $\ff_2^3$
where the line through two distinct points $v_1,v_2\in V_1$ is labeled by the
unique nonzero vector $w\in\ff_2^3$ so that $\langle v_1, w\rangle =\langle
v_2,w\rangle=0$ under the standard inner product 
$\langle\cdot,\cdot\rangle$ on $\ff_2^3$; see
Figure \ref{Fig:Fano}. 
%Note that a $3\times 3$-matrix $M\in G$ acts on the right
%on the lines as labeled by non-zero row-vectors of length 3 by
%right-multiplication with the inverse of the transpose of $M$.
By computing the stabilizer in $G$ of the element represented by $100$ 
in each of $V_1$ and $V_2$ we 
see that we have $G$-set isomorphisms $V_1\cong H_1\backslash G$, and $V_2\cong H_2\backslash G$, where
$$
H_1 =
\left(\begin{array}{ccc}1 & 0 & 0 \\ * & * & * \\ * & * & *\end{array}\right) 
\mbox{ and }
H_2 =
\left(\begin{array}{ccc} 1 & * & * \\0 & * & * \\0 & * & *\end{array}\right).
$$ 

Let us now consider the subset $S = \{A, B\} \subset G=\Gl_3(\ff_2)$ consisting of the two matrices
$$
A= \left( \begin{array}{rrr} 1&1&0\\ 0&0&1\\ 0&1&0 \end{array} \right), \qquad
B= \left( \begin{array}{rrr} 0&1&0\\ 0&0&1\\ 1&0&0 \end{array} \right).$$
The action of $A$ on the Fano plane is given by dotted arrows in Figure
\ref{Fig:Fano}, while the action of $B$ is rotation by 120 degrees
counterclockwise.  By rearranging the vertices we see that the
Cayley graph $V_1[A,B]$ is the graph in Figure \ref{Fig:Graph1}
where the dotted arrows indicate edges associated to $A$ and
the solid arrows indicate edges associated to $B$.
By tracking the action of $A$ and $B$ on the lines of the Fano plane
(or on the three points each line contains) we construct the Cayley graph 
$V_2[A,B]$ as in Figure~\ref{Fig:Graph2}.
These graphs are also given in \cite[Ex.~11.4.7]{Buser} and \cite{Brooks}.

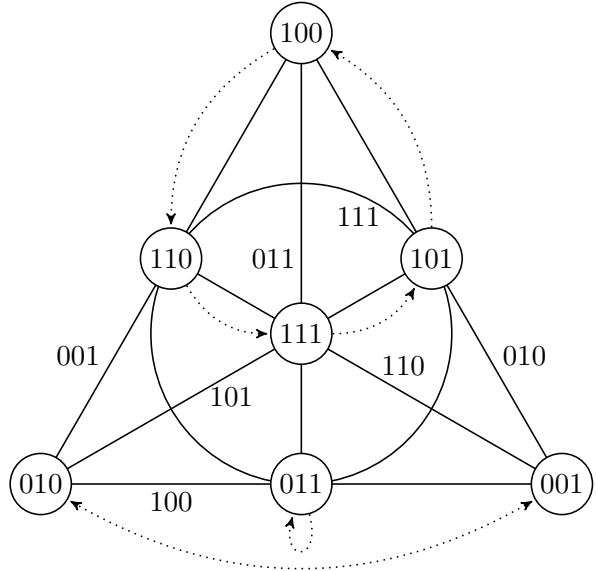
\begin{figure}
\begin{tikzpicture}[inner sep=2pt, shorten >= 1pt, >=stealth', semithick]
\node [below left,inner sep=0pt] at (58:2) {$111$};
\draw (0,0) circle (2cm);
%\draw[dotted,line width=0.5mm, ->] (0,2)+(150:2) arc (150:240:2cm);
%%\draw[dotted,line width=0.5mm, ->] (0,2)+(240:2) arc (240:300:2cm);
%\draw[dotted,line width=0.5mm, ->] (0,2)+(300:2) arc (300:390:2cm);
%\draw[dotted,line width=0.5mm, ->] (0,2)+(30:2) arc (30:150:2cm);
\draw (90:4) -- (210:4) node[pos=0.75,above left]{$001$};
\draw (90:4) -- (330:4) node[pos=0.75,above right]{$010$};
\draw (90:4) -- (90:-2) node[pos=0.5,left]{$011$};
\draw (330:4) -- (210:4) node[pos=0.75,below, inner sep=3pt]{$100$};
\draw (210:4) -- (210:-2) node[pos=0.43,below right, inner sep=0pt]{$101$};
\draw (330:4) -- (330:-2) node[pos=0.47,above right, inner sep=1pt]{$110$};
\node[circle,draw,fill=white] (z100) at (90:4) {$100$};
\node[circle,draw,fill=white] (z010) at (210:4) {$010$};
\node[circle,draw,fill=white] (z001) at (330:4) {$001$};
\node[circle,draw,fill=white] (z101) at (30:2) {$101$};
\node[circle,draw,fill=white] (z110) at (150:2) {$110$};
\node[circle,draw,fill=white] (z011) at (270:2) {$011$};
\node[circle,draw,fill=white] (z111) at (0,0) {$111$};
\path[->] 
    (z100) edge[dotted, bend right] node{} (z110)
    (z110) edge[dotted, bend right] node{} (z111) 
    (z111) edge[dotted, bend right] node{} (z101) 
    (z101) edge[dotted, bend right] node{} (z100)
    (z011) edge[dotted, loop below] node{} (z011);
\path[<->,shorten <= 1pt] 
    (z010) edge[dotted, bend right] node{} (z001);
%\draw [shorten >= 1pt, ->, dotted, bend right] (z100) to (z110);
%\draw [shorten >= 1pt, ->, dotted, bend right] (z110) to (z111);
%\draw [shorten >= 1pt, ->, dotted, bend right] (z111) to (z101);
%\draw [shorten >= 1pt, ->, dotted, bend right] (z101) to (z100);
%\draw [shorten >= 1pt, ->, dotted, loop] (z011) to (z011);
%\draw [shorten >= 1pt, shorten <= 1pt, <->, dotted, bend right] (z010) to (z001);
\end{tikzpicture}
\caption{The Fano plane with seven points and seven lines,
with the action of the matrix $A$ given by dotted arrows.}
\label{Fig:Fano}
\end{figure}

\begin{figure}
\begin{tikzpicture}[->,>=stealth',shorten >=1pt,auto,node distance=3cm,
                    semithick,inner sep=2pt]
  \node[circle,draw,fill=white] 	       (4)                              {$110$};
  \node[circle,draw,fill=white]         (1) [above right of= 4] 		{$101$};
  \node[circle,draw,fill=white]         (3) [below right of= 4] 		{$011$};
  \node[circle,draw,fill=white]         (0) [above right of= 3] 		{$111$};
  \node[circle,draw,fill=white]         (6) [right of= 0]       		{$100$};
  \node[circle,draw,fill=white]         (2) [above right of= 6]       	{$010$};
  \node[circle,draw,fill=white]         (5) [below right of= 6]          {$001$};

  \path[->] (4) edge [loop right,dotted] node {}   (4)
            	     edge []  node {}  (3)
                (1) edge [] 	node {}  (4)
                     edge [dotted]     node {} (6)
                (3) edge [dotted]  node {} (0)
                      edge node {} (1)
                (0) edge [loop left] node {} (0)
                     edge [dotted] node {} (1)
                (6) edge  node{} (2)
                     edge [dotted]  node{} (3)
                (2) edge node {} (5)
                      edge [bend left,dotted] node {} (5)
                (5) edge [bend left,dotted] node{} (2)
                      edge node{} (6);

\end{tikzpicture}
\caption{The Cayley graph $V_1[A,B] =H_1\backslash G[A,B]$ of $A$ and $B$ acting on the points in the Fano plane.}
\label{Fig:Graph1}
\end{figure}

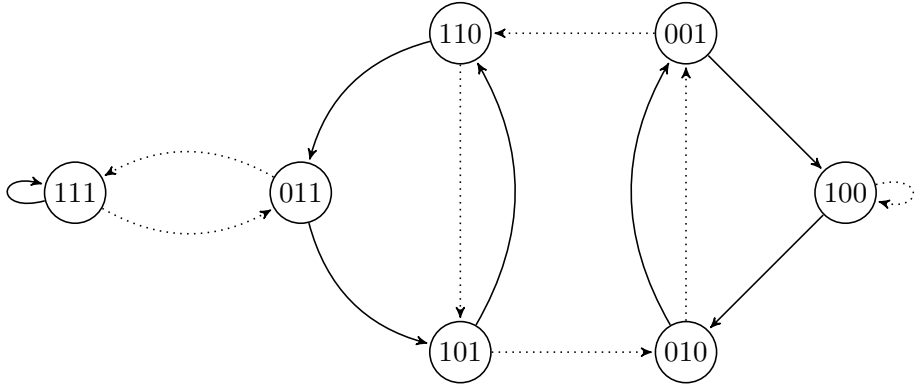
\begin{figure}
\begin{tikzpicture}[->,>=stealth',shorten >=1pt,auto,node distance=3cm,
                    semithick,inner sep=2pt]
  \tikzstyle{every state}=[%fill=orange, draw=none,text=white]
                                inner sep=2pt,circle,draw,fill=white]

  \node[circle,draw,fill=white] 	       (3)             			{$111$};
  \node[circle,draw,fill=white]         (0) [right of= 3] 		{$011$};
  \node[circle,draw,fill=white]         (4) [above right of= 0] 		{$110$};
  \node[circle,draw,fill=white]         (6) [below right of= 0] 		{$101$};
  \node[circle,draw,fill=white]         (5) [right of= 4]       		{$001$};
  \node[circle,draw,fill=white]         (2) [right of= 6]       		{$010$};
  \node[circle,draw,fill=white]         (1) [above right of= 2]      	{$100$};

  \path[->]     (0) edge [dotted, bend right] node {}   (3)
            	     edge [bend right]  node {}  (6)
                (3) edge [loop left] 	node {}  (0)
                     edge  [bend right,dotted]    node {} (0)
                (1) edge [loop right,dotted]   node {} (1)
                      edge node {} (2)
                (2) edge [bend left] node {} (5)
                     edge [dotted] node {} (5)
                (4) edge [dotted]  node{} (6)
                     edge [bend right] node{} (0)
                (5) edge [dotted] node {} (4)
                      edge node {} (1)
                (6) edge [dotted] node{} (2)
                      edge [bend right] node{} (4);
\end{tikzpicture}
\caption{The Cayley graph $V_2[A,B]=H_2\backslash G[A,B]$ of $A$ and $B$ acting on the lines in the Fano plane.}
\label{Fig:Graph2}
\end{figure}

The \emph{geometric realization} of a graph $(V,E,o,t)$ is the topological
space that one obtains by glueing closed intervals onto the
set $V$ as follows: for each $e\in E$ one adds an interval
$[0,1]$ identifying $0$ with $o(e)\in V$ and $1$ with $t(e)\in V$.
If one designates a positive real number $l_e$ as the
length of an edge $e$ then one can give the interval assocated to $e$ 
length $l_e$ and in this way one obtains a length space.

For our Cayley graph $G[A,B]$ we will assign a length $l_A>0$ to all edges
associated with $A$ and $l_B>0$ to all edges associated with $B$, and we
thus obtain a length space $X(l_A,l_B)$. The group $G$ acts by isometries on
$X(l_A,l_B)$ and it acts freely, i.e., no non-trivial element of $G$ has a
fixed point on $X(l_A,l_B)$. The quotient length space $H_1\backslash
X(l_A,l_B)$ is the length space $X_1$ associated to $V_1[A,B]$ and  $H_2\backslash
X(l_A,l_B)$ is the length space $X_2$ associated to $V_2[A,B]$.

\begin{prop}\label{Prop:distinct}
If $0<l_A <l_B < \frac{3}{2} l_A$, then $l_A+ \frac{1}{2}l_B$ lies in the covering spectrum of $X_1=H_1\backslash X(l_A,l_B)$ but not in 
the covering spectrum of $X_2=H_2\backslash X(l_A,l_B)$.
\end{prop}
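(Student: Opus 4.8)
The covering spectrum is computed from the minimum marked length map on the fundamental group via the Covering Spectrum Algorithm (\ref{CSA}), so I need to understand $\pi_1$ of the two length spaces $X_1$ and $X_2$ together with the minimum marked lengths of their elements. Each $X_i$ is the geometric realization of a finite connected graph on $7$ vertices with $14$ edges (each vertex has one incoming and one outgoing edge of each of the two colors, so $7$ edges of type $A$ and $7$ of type $B$), hence homotopy equivalent to a wedge of $14 - 7 + 1 = 8$ circles; its fundamental group is free of rank $8$. A basis for this free group can be read off by fixing a spanning tree $T_i$, and then for each edge $e\notin T_i$ the class $\gamma_e$ obtained by going out along $T_i$ to $o(e)$, across $e$, and back along $T_i$ to the basepoint. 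The minimum marked length $m([\gamma])$ of a conjugacy class is the length of the shortest cycle (reduced closed edge-path) in the graph representing that free homotopy class, where length means the sum of the $l_A$'s and $l_B$'s of the traversed edges. So the whole computation reduces to combinatorics of the two explicit graphs in Figures \ref{Fig:Graph1} and \ref{Fig:Graph2} with edge weights $l_A$ and $l_B$.

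First I would catalog the short cycles in each graph. The three self-loops of type $A$ (on vertex $011$ in $V_1$, on vertices $011$ (wait --- need to read) and so on) give cycles of length $l_A$; similarly the self-loop of type $B$ gives a cycle of length $l_B$. These are the absolute shortest cycles. Because $A^4 = B^3 = \mathrm{id}$ (one checks: $A$ has order $4$ and $B$ has order $3$ in $\Gl_3(\ff_2)$), the $A$-edges decompose each graph into $A$-orbits that are either $4$-cycles or fixed points, and the $B$-edges into $3$-cycles or fixed points. So there are monochromatic cycles of lengths $l_A$, $4l_A$, $l_B$, $3l_B$, and then mixed cycles. The key candidate cycle for the proposition is one of length $2l_A + l_B$: a closed path using two $A$-edges and one $B$-edge (this matches $l_A + \tfrac12 l_B$ being half of $2l_A + l_B$). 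Under the hypothesis $0 < l_A < l_B < \tfrac32 l_A$ we get the ordering $l_A < l_B < \tfrac32 l_A < 2l_A < 2l_A + l_B$ and also $2l_A + l_B < 2l_B + l_A$ (since $l_A<l_B$) and comparison with $3l_B$, $4l_A$: we have $2l_A+l_B$ versus $3l_B$ amounts to $2l_A$ vs $2l_B$, so $2l_A+l_B < 3l_B$; and $2l_A+l_B$ vs $4l_A$ amounts to $l_B$ vs $2l_A$, so $2l_A + l_B < 4l_A$. So after running the algorithm past $\delta = l_A$ (which kills all $A$-loops) and $\delta = l_B$ (which kills the $B$-loop), the next relevant threshold is governed by whether there is a generator of $\pi_1$ not yet in the subgroup generated so far whose minimum marked length is $2l_A + l_B$, versus the possibility that the subgroup already fills up or that the next shortest ``new'' cycle has a different length. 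The content of the proposition is: \emph{in $X_1$ there is a jump at $2l_A+l_B$, in $X_2$ there is not}.

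So the crux is a careful bookkeeping argument. I would (i) in each graph, pick a convenient spanning tree consisting of, say, the $B$-edges that form two of the $3$-cycles plus one connecting edge (or whatever is cleanest), making the $8$ free generators explicit; (ii) compute the subgroup $N_1 := \langle \gamma : m(\gamma) \le l_A\rangle$ and then $N_2 := \langle \gamma : m(\gamma) \le l_B \rangle$ in each case --- these are generated by the conjugacy classes of the monochromatic short loops, and I expect them to be proper subgroups; (iii) determine $\delta_3$, i.e., the minimal $m$-value among elements outside $N_2$. For $X_1$ the claim is $\delta_3 = 2l_A + l_B$, realized by an explicit $2$-$A$-$1$-$B$ cycle that is genuinely not in $N_2$; for $X_2$ the claim is that every element whose minimal cycle has length $\le 2l_A + l_B$ already lies in the analogous subgroup, forcing the next jump of $X_2$ to occur strictly after $2l_A + l_B$ (or for $2l_A+l_B$ simply not to be a jump value even though $2l_A+l_B$ may be in the length spectrum). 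The asymmetry is exactly the Gassmann--Sunada phenomenon: the two graphs have the same collection of cycle lengths with multiplicity (that is what Sunada-equivalence of $H_1,H_2$ inside $G$ gives, essentially an isospectrality of the graphs) but the \emph{subgroup-generation pattern} differs --- which is the failure of jump-equivalence.

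\textbf{Main obstacle.} The hard part is step (iii): verifying that in $X_2$ the subgroup generated by all cycles of length $\le 2l_A + l_B$ behaves differently than in $X_1$ --- concretely, checking whether a length-$(2l_A+l_B)$ cycle in $V_2[A,B]$ lies in the subgroup generated by the shorter (monochromatic) cycles, while in $V_1[A,B]$ some length-$(2l_A+l_B)$ cycle does not. This is a finite but genuinely fiddly computation in two free groups of rank $8$, and it must be done correctly for both graphs and cross-checked against the Buser reference; the inequality hypotheses $l_A < l_B < \tfrac32 l_A$ must be used precisely to pin down which length is the ``next'' one, and to ensure no other cycle length sneaks in between $l_B$ and $2l_A + l_B$ (in particular one must confirm there is no mixed cycle of length $l_A + l_B$, which would require an $A$-edge and a $B$-edge forming a $2$-cycle, i.e., $AB$ or $BA$ fixing a point --- this should be ruled out by direct inspection of the graphs). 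I would organize the verification by listing, in each graph, all reduced cycles of weighted length at most $2l_A + l_B$ under the stated constraints, and tracking which free-homotopy classes they represent and what subgroup they generate.
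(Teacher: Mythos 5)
Your reduction of the problem to graph combinatorics is the right one and matches the paper: $\pi_1(X_i)$ is free of rank $8$, the minimum marked length of a conjugacy class is the weighted length of its shortest reduced cycle, and the question is whether $2l_A+l_B$ is a jump value of $\Fil^\bullet_m$ in each case. But the proposal stops at a plan, and the anticipated bookkeeping contains factual errors that would matter if carried through. Your parenthetical that mixed cycles of length $l_A+l_B$ "should be ruled out by direct inspection" is false: $BA$ fixes the vertex $010$ in $V_1$ (e.g., $010\cdot B=001$ and $001\cdot A=010$), and in fact each graph carries \emph{two} inequivalent cycles of length $l_A+l_B$; these are the $\sigma_4,\sigma_5$ and $\tau_4,\tau_5$ in the paper's table and they are essential input to the subgroup computation, not something to be dismissed. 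Similarly, the $A$-orbits in $V_1$ are a $4$-cycle, a fixed point, \emph{and} a $2$-cycle (giving a cycle of length $2l_A$), contradicting your "$4$-cycles or fixed points" claim. And the assertion $\delta_3=2l_A+l_B$ is wrong: the filtration already jumps at $2l_A$ (and at other intermediate lengths) before reaching $2l_A+l_B$; the proposition only requires $2l_A+l_B$ to be some jump value, not the third one.

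The genuinely hard step you flag — deciding whether a cycle of length $2l_A+l_B$ lies in the normal subgroup generated by shorter ones — is where the paper avoids the "fiddly free-group computation" you propose. For $X_2$, a table of short cycles is chosen so that both length-$(2l_A+l_B)$ cycles are visibly words $\tau_2\tau_3$ and $\tau_2\tau_3^{-1}$ in shorter cycles, so $\Fil^{2l_A+l_B}_m\pi_1(X_2)=\Fil^{2l_A+l_B+\epsilon}_m\pi_1(X_2)$ for $0<\epsilon\le l_B-l_A$ and there is no jump. For $X_1$, rather than doing free-group algebra, the paper contracts the short loops $\sigma_1,\dots,\sigma_5$ to a point in $X_1$ and checks on the quotient that the length-$(2l_A+l_B)$ loop $\sigma_6$ remains homotopically nontrivial; this gives a clean topological obstruction that $\sigma_6\notin N$. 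Your plan, corrected for the errors above, could be made to work, but without this trick (or an explicit normal-form argument) the decisive step is left open, and the inequality $l_B<\tfrac32 l_A$ is used precisely to place $2l_A+l_B$ strictly between $3l_A$ and $l_A+2l_B$, a fact your ordering chain does not record.
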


\begin{proof}
We see from the assumption 
that 
$$l_A < l_B < 2l_A < l_A + l_B < 2l_{B} < 3l_{A} < 2l_{A} + l_{B}$$
represent the lengths of the non-trivial minimal loops of length 
below $l_{A} + 2l_{B}$. These lengths form
the first column of the table in Figure \ref{Fig:table}.

The second and third column in the table lists closed loops in
$X_1$ and $X_2$ of the given lengths.
In this table $A_{001}$ denotes the path along the
edge associated to $A$ leaving from the vertex with label $001$.
One has to inspect the appropriate graph to see what the endpoint
of $A_{001}$ is: in $X_1$ it is $010$ and in $X_2$ it is $110$.
Paths are composed according to the following convention: if the endpoint of a path $p$ is the starting
point of a path $q$ then $p*q$ runs through $p$ at double speed on the interval
$[0,1/2]$ and then through $q$ on $[1/2,1]$. 

By inspecting the graphs in Figures \ref{Fig:Graph1} and \ref{Fig:Graph2} one 
deduces that every closed loop in $X_1$ or $X_2$ that is of length
below $l_A+2l_B$, is freely homotopic to either a
constant loop, or to a loop in the table, or to the inverse of a loop in the
table in Figure \ref{Fig:table}. 
Note also that the lengths of the loops in the table are minimal within
their free homotopy class.

\begin{figure}
\begin{tabular}{| l || c | c |} \hline
length   &  $X_1=H_1\backslash X(l_A,l_B)$  &  $X_2=H_2\backslash X(l_A,l_B)$ \\ \hline \hline
$l_A$ &  $\sigma_1 = A_{110}$ & $\tau_1 = A_{100}$ \\ \hline
$l_B$ & $\sigma_2 = B_{111}$ & $\tau_2 = B_{111}$ \\ \hline 
$2l_A$ & $\sigma_1^2$ & $ \tau_1^2$ \\
& $\sigma_3 = A_{010}*A_{001}$ & $ \tau_3 = A_{111}*A_{011}$ \\ \hline
$l_A + l_B$ 
&  $\sigma_4 = B_{010}*A_{001}$& $\tau_4 = A_{010}*B_{010}^{-1}$ \\ 
&  $\sigma_5 = B_{010}*A_{010}^{-1}$& $\tau_5 = A_{110}*B_{101}$ \\ 
\hline 
$2 l_B$ & $\sigma_2^2$  &  $\tau_2^2$\\ \hline 
$3 l_A $ & $\sigma_1^3$  & $\tau_1^3$ \\ \hline 
$2 l_A + l_B$ &
$\sigma_6 = B_{011}*A_{101}*A_{100}$ &
  $\tau_2*\tau_3$ \\ 
&$\sigma_{7} = B_{011}*A_{111}^{-1}*A_{011}^{-1}$&
  $\tau_2*\tau_3^{-1}$ \\ 
\hline
\end{tabular}
\caption{The minimal loops in $X_1$ and $X_2$ of length below $l_A+2l_B$}
\label{Fig:ShortGeodesics}
%in $(\mGOne, d_1)$ and $(\mGTwo, d_2)$
\label{Fig:table}
\end{figure}

Denoting the minimum marked length map for $X_2$ by $m$
one sees that $\Fil_m^{2l_A+l_B} (\pi_1(X_2))$ is the normal
subgroup of $\pi_1(X_2)$ generated by all homotopy classes of loops that are
freely homotopic to one of $\tau_1, \ldots ,\tau_5$, which is also
equal to $\Fil_m^{2l_A+l_B+\epsilon} (\pi_1(X_2))$ for any $0<\epsilon\le l_B - l_A$.  
We deduce that $2l_A+l_B$ is not a jump of the filtration $\Fil_m^\bullet\pi_1(X_2)$,
so that by Proposition \ref{prop:filter} the number $l_A+ \frac{1}{2}l_B$ is not in the
covering spectrum of $X_2$.

To complete the proof we will show that the class $c$ in $\pi_1(X_1,x_0)$ 
of any loop in $X_1$ based at $x_0$, that is freely homotopic
to $\sigma_6$, does not lie in the normal subgroup $N$ of
$\pi_1(X_1,x_0)$ generated by the classes of all loops based at $x_0$
that are freely homotopic to one of $\sigma_1,\ldots,\sigma_5$.
To see this, consider the topological quotient space $K$ of $X_1$
that one obtains by contracting the loops $\sigma_1,\ldots,\sigma_5$ to a
point. Then $N$ lies in the kernel of the induced group homomorphism
$\pi_1(X_1)\to \pi_1(K)$, but one sees from the graph in Figure
\ref{Fig:Graph1} that $c$ does not map to the trivial class.  Since
$\sigma_6$ has length $2l_A+l_B$, and this length is minimal in its free
homotopy class, it follows with Proposition \ref{prop:filter} that $l_A+ \frac{1}{2}l_B$ lies in the covering spectrum of
$X_1$.  
\end{proof}

%%%%%%%%%%%%%%%%%%%%%%%%%
%%%%%%%%%%%%%%%%%%%%%%%%%
%%%%%%%%%%%%%%%%%%%%%%%%%
\section{From Cayley graphs to isospectral surfaces}\label{Sec:Surfaces}

Suppose that $G$ is a group and $S$ is a subset of $G$.
For any set $V$ with a right action of $G$ we defined the Cayley graph
$V[S]$ with vertex set $V$ and edge set $E$ consisting of an edge from 
$x$ to $xs$ for each $s\in S$ and $x \in V$.
%and for each $s\in S$ and $x\in V$
%and edge from $x$ to $xs$.
We can construct a surface out of this graph as follows (cf. \cite[Chapter 11]{Buser}).

Suppose that we have a compact connected surface of genus $g$.
If $n=\#S$, then we add $n$ handles to the surface,
and on this surface $M$ we choose a Riemannian metric.
We now cut open the handles to obtain a compact surface $M'$
with $2n$ border components $\{B_s, \overline B_s\colon\; s\in S\}$,
where for each $s\in B$ a homeomorphism $B_s\to \overline B_s$
is given so that identifying $B_s$ with $\overline B_s$ for all $s\in S$
gives us $M$.  In Figure \ref{Fig:Genus2} this is illustrated for $g=0$ and
$n=2$.

\begin{figure}
\includegraphics[width = 2.5in]{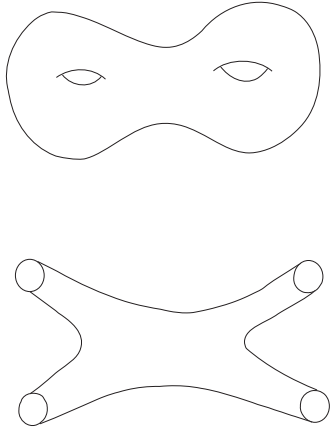}
\caption{Cutting a surface of genus two along two curves}\label{Fig:Genus2}
\end{figure}

With this building block we can now make a surface as follows.  Take the
surface $M'\times V$ and for every edge from an element $x\in V$ to $xs\in V$
of color $s\in S$ we identify $B_s\times\{x\}$ with $\overline
B_s\times \{xs\}$ using the homeomorphism $B_s\to \overline B_s$.  We will
denote this quotient by $M[V;S]$. It is a compact surface,
and it is not hard to see that if $V[S]$ is connected, then 
the genus of $M[V;S]$ is $1+(g+n-1)\#V$.

A Riemannian metric on $M$ then gives rise to a Riemannian metric on
$M[V;S]$ and it follows that any color-preserving graph automorphism of $V[S]$ induces an isometry
of $M[V;S]$. Thus, for any subgroup $H$ of $G$ we find that
$H$ acts freely and by isometries on $M[G;S]$ and the manifolds
$H\backslash M[G;S]$ and $M[H\backslash G;S]$ are isometric.

This set-up allows us to produce isospectral manifolds
by Sunada's method.
We recall that a triple of groups $(G, H_1, H_2)$ where $H_1, H_2 \leq G$ is
said to be a \emph{Gassmann-Sunada triple} if for any conjugacy class $C
\subset G$ we have $\#(C \cap H_1) = \#(C \cap H_2)$.  
This group theoretic notion identifies the configurations of isospectral
manifolds in the sense of the following theorem.

\begin{thm}[\cite{Sunada, Pesce2}]\label{Thm:SunadaPesce}
Let $G$ be a finite group acting freely on a closed connected manifold $N$, and let $H_1$ and $H_2$ 
be subgroups of $G$. Then the following are equivalent.
\begin{enumerate}
\item $(G, H_1, H_2)$ is a Gassmann-Sunada triple;
\item for every $G$-invariant metric on $N$, the quotient manifolds 
$H_1\backslash M$ and $H_2 \backslash N$ have the same Laplace spectrum.
\end{enumerate}
\end{thm}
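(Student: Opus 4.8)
The plan is to prove the Sunada--Pesce correspondence by comparing the heat kernel traces of the two quotient manifolds, reducing the equality of spectra to a purely combinatorial statement about $G$, $H_1$, and $H_2$. First I would recall that since $G$ acts freely and isometrically on $(N,g)$, the quotients $H_i\backslash N$ inherit Riemannian metrics whose Laplacians are compatible with the covering $N \to H_i\backslash N$. The key analytic input is that the heat kernel $K_N(t,x,y)$ on the universal-type covering $N$ can be used to write the heat kernel on $H_i\backslash N$ as the periodization $K_{H_i\backslash N}(t,\bar x,\bar y) = \sum_{h \in H_i} K_N(t,x, hy)$, and hence $\tr e^{-t\Delta_{H_i\backslash N}} = \frac{1}{|H_i|}\sum_{h\in H_i} \int_{N} K_N(t,x,hx)\,dx$ (the $\frac{1}{|H_i|}$ coming from integrating over a fundamental domain versus all of $N$). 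So the whole spectral comparison comes down to showing $\sum_{h\in H_1}\int_N K_N(t,x,hx)\,dx = \sum_{h\in H_2}\int_N K_N(t,x,hx)\,dx$ for all $t>0$.

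Next I would observe that the function $h \mapsto \int_N K_N(t,x,hx)\,dx$ depends only on the $G$-conjugacy class of $h$: replacing $h$ by $ghg^{-1}$ and substituting $x \mapsto g^{-1}x$ leaves the integral unchanged because $G$ acts by isometries and hence commutes with the heat kernel, $K_N(t, gx, gy) = K_N(t,x,y)$. Therefore each side of the desired identity is a sum over conjugacy classes $C$ of $G$, weighted by $\#(C\cap H_i)$, of the common value $F_C(t) := \int_N K_N(t,x, h_C x)\,dx$ for any representative $h_C \in C$. The Gassmann--Sunada condition $\#(C\cap H_1) = \#(C\cap H_2)$ for every $C$ then gives the equality term by term, so $\tr e^{-t\Delta_{H_1\backslash N}} = \tr e^{-t\Delta_{H_2\backslash N}}$ for all $t$, which forces $\Spec(H_1\backslash N) = \Spec(H_2\backslash N)$ since the heat trace determines the spectrum with multiplicities. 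This establishes the implication (1) $\Rightarrow$ (2).

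For the converse (2) $\Rightarrow$ (1), the cleanest route is Pesce's: suppose equality of spectra holds for every $G$-invariant metric but the triple is not Gassmann--Sunada, i.e.\ the $\Q$-representations $\Q[H_1\backslash G]$ and $\Q[H_2\backslash G]$ of $G$ are not isomorphic. One then produces a $G$-invariant metric detecting this, for instance by choosing $N$ together with a metric whose Laplacian on $N$ has an eigenspace affording a suitable $G$-representation that pairs differently with $\Q[H_1\backslash G]$ and $\Q[H_2\backslash G]$; the multiplicity of that eigenvalue in $H_i\backslash N$ is $\dim (\text{eigenspace})^{H_i}$, which equals the multiplicity of the trivial representation in the tensor product, and this differs for $i=1,2$ when the permutation representations are inequivalent. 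Since this direction is not needed for Theorem~\ref{Thm:Main} and is standard, I would state it and cite \cite{Pesce2} rather than reproduce the construction.

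The main obstacle is making the heat-kernel periodization rigorous in the stated generality: one must know that $N$ (or rather its genuine universal cover, but here $N$ itself plays the role of the common cover) is a complete manifold on which the heat kernel exists, is unique, and satisfies the uniform Gaussian decay needed to justify interchanging the sum over the (finite!) group $H_i$ with integration. Because $H_i$ is finite and $N$ is compact this is actually automatic --- the sum is finite and everything is smooth on a compact manifold --- so the ``obstacle'' is really just bookkeeping with fundamental domains and the normalization constant; the genuinely substantive content is the conjugacy-invariance observation combined with the Gassmann--Sunada hypothesis, exactly as in Sunada's original argument.
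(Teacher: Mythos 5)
The paper does not prove this theorem; it states it as a cited result, attributing $(1)\Rightarrow(2)$ to Sunada~\cite{Sunada} and the converse $(2)\Rightarrow(1)$ to Pesce~\cite{Pesce2}. Your proposal instead reconstructs Sunada's original heat-trace argument, and it is essentially correct. The periodization formula $K_{H_i\backslash N}(t,\bar x,\bar y)=\sum_{h\in H_i}K_N(t,x,hy)$, the passage from the fundamental-domain integral to $\tfrac{1}{|H_i|}\int_N$, and the observation that $h\mapsto \int_N K_N(t,x,hx)\,dx$ is a class function on $G$ are all sound and are precisely what Sunada exploits; the reduction of the compactness/convergence issues to triviality (finite $H_i$, compact $N$) is also fine, and deferring the converse to~\cite{Pesce2} matches what the paper itself does.

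One small point you should make explicit: after regrouping the sum over $h\in H_i$ into conjugacy classes, the two traces are $\tfrac{1}{|H_1|}\sum_C \#(C\cap H_1)F_C(t)$ and $\tfrac{1}{|H_2|}\sum_C \#(C\cap H_2)F_C(t)$. The Gassmann--Sunada hypothesis gives term-by-term equality of the sums, but you also need $|H_1|=|H_2|$ to cancel the normalizing prefactors. This does follow from the hypothesis (sum $\#(C\cap H_i)$ over all conjugacy classes $C$ to get $|H_i|$), but as written the deduction ``gives the equality term by term'' silently uses it. Worth a sentence so the argument is airtight.
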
 
\noindent
The fact that $(1)$ implies $(2)$ is the well-known theorem of Sunada \cite{Sunada} 
and the reverse implication follows from the work of Pesce \cite{Pesce2}. 
%Sunada's theorem along with its various generalizations 
%(e.g., \cite{DG}, \cite{Pesce}, \cite{Sut} and \cite{Ballmann}) 
%account for many of the isospectral examples in the literature. 

To apply Sunada's method to our manifolds associated to Cayley graphs, suppose
that $H_1$ and $H_2$ are two subgroups of $G$ so that $(G,H_1,H_2)$ is a
Gassmann-Sunada triple.  The theorem above then implies
that $M[H_1\backslash G;S]$ and $M[H_1\backslash G;S]$ are isospectral. 
We can now finish the proof of our main result.

\begin{proof}[Proof of Theorem \ref{Thm:Main}]
The group $G=\Gl_3(\ff_2)$ and the subgroups $H_1$ and $H_2$ of the previous
section form a Gassmann-Sunada triple.  Taking $S=\{A,B\}$, so $n=2$ we see that
$M[V_1;S]$ and $M[V_2;S]$ are connected compact isospectral surfaces of genus
$1+7(g+1)$.

We will now vary the choice of the Riemannian metric on $M$. 
%We obtained $M$ by adding two handles to a compact genus $g$ surface $M_0$. 
%Let us now choose a metric on $M_0$ that gives it a very small
%diameter, and let us give approximate length $l_A>0$ to the handle
%associated to $A\in S$ and $l_B>0$ to the handle associated to $B$.
For each $\epsilon> 0$ we choose a metric $g^\epsilon$ on $M$ so that
the handle associated to $A$, thought of as a cylinder, is isometric to
$[0,l_A]\times \R/\epsilon \Z$, and the handle associated to $B$ is isometric to
$[0,l_B]\times \R/\epsilon \Z$, and the diameter of the complement of the two handles 
is below $\epsilon$. 
As $\epsilon$ tends to zero, the metric spaces $(M,g^\epsilon)$
then converge---in the Gromov-Hausdorff sense---to a bouquet
of two circles of circumferences $l_A$ and $l_B$.  

We obtain induced Riemannian metrics $g_i^\epsilon$ on $M[V_i;S]$ for $i=1,2$
and by the remark following Theorem \ref{Thm:SunadaPesce} the manifolds
$(M[V_1;S],g_1^\epsilon)$ and $(M[V_2;S],g_2^\epsilon)$ are isospectral for
every~$\epsilon$.  For both $i=1$ and $i=2$ the spaces
$(M[V_i;S],g_i^\epsilon)$ now converge as $\epsilon \to 0$ to the space
$X_i=H_i\backslash X(l_A,l_B)$ in the Gromov-Hausdorff metric.  By
Gromov-Haussdorff continuity of the covering spectrum (Proposition
\ref{Prop:GHConv}) and the fact that $X_1$ and $X_2$ have distinct covering
spectra, as shown in Proposition \ref{Prop:distinct}, it now follows that for
$\epsilon$ sufficiently small the isospectral surfaces
$(M[V_1;S],g_1^\epsilon)$ and $(M[V_2;S],g_2^\epsilon)$ have distinct covering
spectra as well.  
\end{proof}

The preceding argument actually shows the following.
\begin{thm}
Let $(G, H_1, H_2)$ be a non-trivial Gassmann-Sunada triple, 
and $S\subset G$.
Suppose we can choose lengths $l_s>0$ for all $s\in S$
such that the 
induced length space structures on the geometric realizations of 
the Cayley graphs $H_1\backslash G[S]$ and $H_2\backslash G[S]$
give rise to distinct covering spectra.
Then every connected closed surface $M$ with distinct marked handles
labeled by $S$ has a Riemannian metric so that
$H_1\backslash M[G;S]$ and $H_2\backslash M[V_2;S]$
are isospectral surfaces with distinct covering spectra.
\end{thm}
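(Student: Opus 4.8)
The plan is to observe that this final theorem is essentially an abstraction of the argument just given in the proof of Theorem~\ref{Thm:Main}: every step used only the formal features named in the hypotheses, so the proof should proceed by carefully re-running that argument in the general setting. First I would fix a connected closed surface $M$ of genus $g$ equipped with $n = \#S$ distinct marked handles labeled by the elements of $S$; cutting these handles open produces the surface $M'$ with $2n$ boundary components $\{B_s, \overline{B}_s : s \in S\}$ together with the gluing homeomorphisms $B_s \to \overline{B}_s$, exactly as in Section~\ref{Sec:Surfaces}. From the data $(M', \{B_s, \overline B_s\})$ and any right $G$-set $V$ one forms the surface $M[V;S]$, and one checks—just as before—that color-preserving graph automorphisms of $V[S]$ induce isometries of $M[V;S]$, so that $H \backslash M[G;S]$ and $M[H \backslash G;S]$ are isometric for every subgroup $H \le G$.

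Next I would invoke the Gassmann-Sunada hypothesis: since $(G, H_1, H_2)$ is a Gassmann-Sunada triple and $G$ acts freely on $M[G;S]$, Theorem~\ref{Thm:SunadaPesce} yields that $H_1 \backslash M[G;S]$ and $H_2 \backslash M[G;S]$ are isospectral for every $G$-invariant metric on $M[G;S]$, hence in particular for the metrics coming from any choice of Riemannian metric on $M$. The key point is then to arrange the Gromov-Hausdorff degeneration. Using the prescribed lengths $l_s > 0$, I would choose for each $\epsilon > 0$ a metric $g^\epsilon$ on $M$ making the handle labeled $s$ isometric to $[0, l_s] \times \R/\epsilon\Z$ and making the diameter of the complement of the handles less than $\epsilon$; then $(M, g^\epsilon)$ converges in the Gromov-Hausdorff sense to the bouquet $\bigvee_{s \in S} S^1(l_s)$, and correspondingly $(M[V_i; S], g_i^\epsilon)$ converges to the length space $H_i \backslash X(\{l_s\})$ obtained as the geometric realization of $H_i \backslash G[S]$ with edge lengths $l_s$. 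By hypothesis these two limit length spaces have distinct covering spectra, so by Gromov-Hausdorff continuity of the covering spectrum (Proposition~\ref{Prop:GHConv}) the covering spectra of $(M[V_1;S], g_1^\epsilon)$ and $(M[V_2;S], g_2^\epsilon)$ are distinct for all sufficiently small $\epsilon$, while these surfaces remain isospectral by the previous step. Taking such an $\epsilon$ gives the desired metric on $M$.

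I expect no single step to be a genuine obstacle, since each ingredient has already been established in the excerpt; the real work is bookkeeping. The one place that requires a little care is verifying that the metric $g^\epsilon$ can actually be realized on a surface with the prescribed marked handles: one must check that a surface of any genus $g$ with $n$ marked handles admits a metric in which the handles are long thin flat cylinders of the specified lengths $l_s$ and the remaining ``body'' has diameter $< \epsilon$. This is elementary—one builds a very small model of $M' $ (the genus-$g$ piece with $2n$ tiny boundary circles) and glues in the standard flat cylinders $[0,l_s]\times\R/\epsilon\Z$, smoothing near the seams—but it is the step where one must be sure the construction is not obstructed by the topology of $M$, and where the hypothesis that the handles are \emph{distinctly marked and labeled by $S$} is used, so that the cylinders can be glued back in according to the edges of $V_i[S]$. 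Once this is in hand, the remainder of the argument is a verbatim repetition of the proof of Theorem~\ref{Thm:Main} with $\{A,B\}$ replaced by $S$ and the specific Fano-plane data replaced by the abstract triple $(G,H_1,H_2)$.
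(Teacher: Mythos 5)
Your proposal is correct and follows essentially the same route as the paper, which simply states that ``the preceding argument actually shows the following'' and thereby refers back to the proof of Theorem~\ref{Thm:Main}; you have spelled out that abstraction faithfully, replacing $\{A,B\}$ by $S$ and the Fano-plane data by the general Gassmann--Sunada triple, using the same three ingredients (Sunada--Pesce isospectrality, the degenerating metrics $g^\epsilon$ with flat-cylinder handles of lengths $l_s$, and Gromov--Hausdorff continuity of the covering spectrum). Your remark about verifying that such a metric $g^\epsilon$ actually exists on a surface with the prescribed marked handles is a reasonable sanity check that the paper leaves implicit, but it does not constitute a different approach.
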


%%%%%%%%%%%%%%%%%%%%%%%%%%%%%%%%%%%%%%%%%%%%%%
%%%%%%%%%%                   BIBLIOGRAPHY        %%%%%%%%%%%%%%%%%%%%
%%%%%%%%%%%%%%%%%%%%%%%%%%%%%%%%%%%%%%%%%%%%%%

%\bibliographystyle{amsplain}
\bibliographystyle{amsalpha}

\end{document}